\documentclass[a4paper]{amsart}

\usepackage[utf8]{inputenc}
\usepackage[english]{babel}
\usepackage[final, colorlinks]{hyperref}

\usepackage{amsmath, amscd, amsfonts, amsthm, amssymb, bbm, mathtools}
\usepackage{float, graphicx, tikz}
\usepackage{apptools}
\usepackage{a4wide}
\usepackage{comment}

\title{The Expected Number of Roots over The Field of $p$-adic Numbers}
\author{Roy Shmueli}

\address{Raymond and Beverly Sackler School of Mathematical Sciences, Tel Aviv University, Tel Aviv 69978, Israel}
\email{royshmueli@tauex.tau.ac.il}

\AtAppendix{\counterwithin{thm}{section}}
\AtAppendix{\counterwithin{equation}{section}}

\theoremstyle{plain}
\newtheorem*{thm*}{Theorem}
\newtheorem{thm}{Theorem}
\newtheorem{cor}[thm]{Corollary}
\newtheorem{prop}[thm]{Proposition}

\newtheorem{lem}[thm]{Lemma}

\theoremstyle{definition}
\newtheorem{defn}[thm]{\protect{Definition}}

\theoremstyle{remark}

\makeatletter


\newcommand*{\ZZ}{\mathbb{Z}}
\newcommand*{\QQ}{\mathbb{Q}}
\newcommand*{\RR}{\mathbb{R}}
\newcommand*{\CC}{\mathbb{C}}
\newcommand*{\FF}{\mathbb{F}}

\renewcommand*{\Pr}{\mathbb{P}}
\newcommand*{\Ex}{\mathbb{E}}
\newcommand*{\cond}{\middle|}

\newcommand{\diff}{\!\mathop{}\mathrm{d}}

\DeclarePairedDelimiter{\pa}{\lparen}{\rparen}
\DeclarePairedDelimiter{\br}{\lbrack}{\rbrack}
\DeclarePairedDelimiter{\ang}{\langle}{\rangle}
\DeclarePairedDelimiter{\set}{\{}{\}}
\DeclarePairedDelimiter{\abs}{\lvert}{\rvert}
\DeclarePairedDelimiter{\floor}{\lfloor}{\rfloor}

\newcommand{\bigcupdot}{\mathop{%
    \vphantom{\bigcup}%
    \mathpalette\@bigcupdot{}%
}}

\newcommand*{\@bigcupdot}[2]{%
  \ooalign{%
    $\m@th#1\bigcup$\cr
    \sbox0{$#1\bigcup$}%
    \dimen@=\ht0 %
    \advance\dimen@ by -\dp0 %
    \sbox0{\scalebox{2}{$\m@th#1\cdot$}}%
    \advance\dimen@ by -\ht0 %
    \dimen@=.5\dimen@
    \hidewidth\raise\dimen@\box0\hidewidth
  }%
}


\renewcommand{\pod}[1]{\allowbreak\mathchoice
  {\if@display \mkern 18mu\else \mkern 8mu\fi (#1)}
  {\if@display \mkern 18mu\else \mkern 8mu\fi (#1)}
  {\mkern4mu(#1)}
  {\mkern4mu(#1)}
}

\newcommand*{\dlt}{\varepsilon_1}

\newcommand*{\vecz}{\vec{0}}
\newcommand*{\step}{\vec{v}}

\renewcommand*{\u}{\vec{u}}
\newcommand*{\w}{\vec{w}}
\newcommand*{\x}{\vec{x}}
\newcommand*{\y}{\vec{y}}

\newcommand*{\rC}{\xi}

\newcommand*{\xv}{\vec{X}}
\newcommand*{\alv}{\vec{\alpha}}
\newcommand*{\rCv}{\vec{\rC}}

\newcommand*{\rCount}[1]{C\pa*{{#1}}}
\newcommand*{\rCountF}[2][\QQ_{p}]{C_{#1}\pa*{{#2}}}

\newcommand*{\hCount}[2][k]{H_{#1}\pa*{{#2}}}
\newcommand*{\htCount}[2][k]{H'_{#1}\pa*{{#2}}}

\newcommand*{\Rmod}[1]{\ZZ/{#1}\ZZ}

\newcommand*{\cAnd}[2]{\left\{  \begin{array}{l}
#1\\
#2
\end{array}\right.}
\newcommand*{\cAndA}[3]{\left\{  \begin{array}{l}
#1\\
#2\\
#3
\end{array}\right.}

\newcommand*{\lift}[1]{{L\pa*{{#1}}}}

\DeclareMathOperator{\den}{Weight}

\hypersetup{
  pdftitle={The Expected Number of Roots over The Field of p-adic Numbers},
  pdfauthor={\@author}
}

\makeatother

\begin{document}

\begin{abstract}
We study the roots of a random polynomial over the field of $p$-adic numbers.
For a random monic polynomial with i.i.d.\ coefficients in $\ZZ_p$, we obtain an estimate for the expected number of roots of this polynomial. In particular,
if the coefficients take the values $\pm1$ with equal probability, the expected number of $p$-adic roots converges to
$\pa*{p-1}/\pa*{p+1}$ as the degree of the polynomial tends to $\infty$.
\end{abstract}

\maketitle

\section{Introduction}
\label{sec:introduction}

Consider the random polynomial
\[
f\pa*{X}=\rC_{0}+\rC_{1}X+\dots+\rC_{n-1}X^{n-1}+X^{n}
\]
where $\rC_{0},\dots,\rC_{n-1}$ are i.i.d.\ random variables taking values in a field $F$.
We denote by $\rCountF[F]f$ the number of roots of $f$ in $F$ without multiplicities, i.e.\
\begin{equation}
C_{F}\pa*{f}=\#\set*{ x\in F:f\pa*{x}=0} \text{.}\label{eq:root-count}
\end{equation}
We ask the question: What is the expected value of $\rCountF[F]f$
for fields $F$ of interest?

This question goes back to Bloch and P\'olya \cite{bloch_roots_1932} who showed that when $\rC_0,\dots,\rC_{n-1}$ are Rademacher random variables, i.e.\ taking the values $\pm 1$ with equal probabilities, then $\Ex\br*{\rCountF[\RR]{f}}=O\pa*{\sqrt{n}}$ as $n\to\infty$.
Since then this question has been studied extensively for $F=\RR$.
Littlewood and Offord \cite{littlewood_number_1938} improved Bloch and P\'olya \cite{bloch_roots_1932} bound and showed the improved bound on two more distributions of $\rC_i$: standard Gaussian distribution, and uniform distribution on the interval $\br*{-1, 1}$.
The first asymptotic formula was obtained by Kac \cite{kac_average_1943} when $\rC_0,\dots,\rC_{n-1}$ are standard Gaussian variables.
After more than a decade, Erd\H{o}s and Offord \cite{erdos_number_1956} proved the same asymptotic formula for polynomials with Rademacher coefficients.
Their results were then generalized by Ibragimov and Maslova \cite{ibragimov_expected_1971} who showed that if $\rC_0, \dots, \rC_n$ are i.i.d.\ with $\Ex\br*{\rC_i} = 0$ and $V\br*{\rC_i}=1$, then
\[
\Ex\br*{\rCountF[\RR]{\sum_{i=0}^n \rC_i X^i}}\sim\frac{2}{\pi}\log n
\]
as $n\to\infty$.
For more recent results, see \cite{soze_real_2017a, soze_real_2017b}.

When $F=\QQ$ is the field of rational numbers, we expect to have a few roots. For example, assume $\rC_0,\dots,\rC_{n-1}$ are Rademacher random variables.
Then the only rational numbers that can be a root, in this case, are  $\pm 1$.
Moreover, we have
\begin{equation*}
\Pr\pa*{f\pa*{\pm 1}=0} = \Pr\pa*{ \pa*{\pm 1}^n+\sum_{i=0}^{n-1} \pm 1  = 0}=  O\pa*{n^{-1/2}},
\end{equation*}
so $\Ex\br*{\rCountF[\QQ]{f}} = O\pa*{n^{-1/2}}$. This argument may be generalized to other coefficients using  the Rogozin-Kolmogorv inequality \cite{rogozin_increase_1961}.

The case of a finite field has recently found applications to random polynomials over $\ZZ$.
Breuillard and Varj\'u \cite{breuillard_irreducibility_2019} settled a conjecture of Odlyzko-Poonen \cite{odlyzko_zeros_1993}, conditionally on the extended Riemann hypothesis for Dedekind zeta functions.
They proved that a random polynomial with $\pm 1$ coefficients is irreducible over $\QQ$ with probability going to $1$ as the degree goes to $\infty$. A key step in their proof is the computation of $\Ex\br*{\rCountF[\FF_p]{f}}$ for $f$ with i.i.d.\ coefficients. In particular, the following estimate may be derived from their arguments \cite[Proposition 23]{breuillard_irreducibility_2019}:
\begin{equation}
  \label{eq:int:finte-field}
  \Ex\br*{\rCountF[\FF_p]{f}} = \Pr\pa*{\rC_{0} = 0} + \frac{p-1}{p} + O\pa*{\exp\pa*{-cn}},
\end{equation}
for some $c>0$.
This result does not depend on the extended Riemann hypothesis.

This paper studies the case where $F = \QQ_p$ is the field of $p$-adic numbers.
On the one hand, $\QQ_p$ is analogous to $\RR$ since both are completions of $\QQ$ with respect to some absolute value.
On the other hand, roots in $\QQ_p$ are closely related to roots in $\FF_p$, due to Hensel's lemma.

The starting point is to consider coefficients distributing according to Haar measure on $\ZZ_p$.
Buhler, Goldestein, Moews, Rosenberg \cite{buhler_probability_2006} showed that the probability that $f$ is split over $\QQ_p$, that is, has $n$ roots in $\QQ_p$,  is $p^{-c n^2 + O\pa*{n \log n}}$.
Caruso \cite{caruso_zeros_2018} computed the expected value of the number of roots in the non-monic case.
Appendix~\ref{app:haar-random-polynomial} computes an exact formula in the monic case:
\begin{equation}
\label{eq:haar-result}
\Ex\br*{\rCountF[\QQ_p]{f}} = \frac{p}{p+1}.
\end{equation}
We used the methods of Evans \cite{evans_expected_2006} and Igusa's local zeta functions \cite{denef_report_1990}, but Caruso's \cite{caruso_zeros_2018} method might be used as well.
This was recently generalized by Bhargava, Cremona, Fisher and Gajovi\'c \cite{bhargava_density_2021} who computed all the moments of $\rCountF[\QQ_p]{f}$.
See \cite{kulkarni_padic_2021, manssour_probabilistic_2020} for more related works.

Our result deals with a rather general distribution for the coefficients. We state it in a general form and then consider specific distributions that may appear in future applications.

In this paper, a random variable taking values in $E \subseteq \QQ_p$ is a measurable function with respect to Borel $\sigma$-algebra on $\QQ_p$.
Also, we extend the definition of $\rCountF[E]{f}$ to be the number of roots of $f$ in the subset $E$ of $F$ without multiplicities, see equation \eqref{eq:root-count}.
\begin{thm}
\label{thm:main}Let $f\pa*{X}=\rC_{0}+\rC_{1}X+\dots+\rC_{n-1}X^{n-1}+X^{n}$
where $\rC_{0},\dots,\rC_{n-1}$ are i.i.d.\ random variables taking values in $\ZZ_{p}$ and distributed such that $\rC_i\bmod p$ is non-constant random variable.
Denote $f_{0}\pa*{X}=f\pa*{pX}$.
Then for any $\varepsilon>0$
\begin{equation}
	\Ex\br*{\rCountF[\QQ_{p}]f}=\Ex\br*{\rCountF[\ZZ_{p}]{f_{0}}}+\frac{p-1}{p+1}+O\pa*{n^{-1/4+\varepsilon}},\label{eq:main-result}
\end{equation}
as $n\to\infty$.
Here the implied constant depends only on $p$, $\varepsilon$ and the law of $\rC_i$.
\end{thm}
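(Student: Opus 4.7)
The plan is to reduce the $\QQ_p$-root count to counts on the residue cosets of $\ZZ_p$ modulo $p$, absorb the $p\ZZ_p$ coset into the $f_0$ term of \eqref{eq:main-result}, and treat each of the $p-1$ unit cosets by a Hensel-type analysis whose randomness I would control via a Fourier mixing estimate.

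First I would note that since $f$ is monic with coefficients in $\ZZ_p$, every root in $\QQ_p$ is integral over $\ZZ_p$ and hence lies in $\ZZ_p$, so $\rCountF[\QQ_p]{f}=\rCountF[\ZZ_p]{f}$. Partitioning $\ZZ_p$ into $p\ZZ_p$ and the $p-1$ unit cosets $a+p\ZZ_p$ with $a\in\{1,\dots,p-1\}$, and using the bijection $y\mapsto py$ to match roots of $f$ in $p\ZZ_p$ with $\ZZ_p$-roots of $f_0(Y)=f(pY)$, one gets
\[
\Ex\br*{\rCountF[\QQ_p]f}=\Ex\br*{\rCountF[\ZZ_p]{f_0}}+\sum_{a=1}^{p-1}\Ex\br*{N_a},
\]
where $N_a:=\#\set*{x\in a+p\ZZ_p:f(x)=0}$. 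The task reduces to showing $\sum_{a=1}^{p-1}\Ex\br*{N_a}=(p-1)/(p+1)+O(n^{-1/4+\varepsilon})$.

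For each unit $a$, the change of variable $X=a+pY$ identifies $N_a$ with the number of $\ZZ_p$-roots of
\[
g_a(Y):=f(a+pY)=\sum_{k=0}^{n}\frac{f^{(k)}(a)}{k!}\,p^kY^k.
\]
The Newton polygon of $g_a$ together with Hensel's lemma determines this root count from the $p$-adic valuations of the Taylor coefficients $c_k:=f^{(k)}(a)/k!$; knowing these $c_k$ modulo a suitable $p^K$ pins down $N_a$ except on a rare bad event where too many $c_k$ share a high common valuation.

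The main technical input would be a Fourier-analytic mixing estimate. Viewing $(c_0,\dots,c_K)\bmod p^K$ as a specific linear image of $(\xi_0,\dots,\xi_{n-1})\bmod p^K$, I would control the total-variation distance to uniform on $(\ZZ/p^K\ZZ)^{K+1}$ by products of characteristic functions of $\xi_i$ evaluated at the associated dual characters. Characters nontrivial already modulo $p$ decay exponentially in $n$ by the hypothesis that $\xi_i\bmod p$ is non-constant; characters trivial mod $p$ but nontrivial at higher scales require a more delicate multi-scale treatment, which should yield a power-saving total-variation bound. Once the joint distribution is close to uniform, the expected value of $N_a$ approaches the corresponding Haar quantity: by translation invariance of Haar on monic polynomials under $f(X)\mapsto f(X+c)$ together with \eqref{eq:haar-result}, the Haar-expected root count is evenly distributed across the $p$ residue cosets and equals $1/(p+1)$ per coset, summing to $(p-1)/(p+1)$ over the $p-1$ unit cosets.

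The hardest part will be proving sufficiently strong Fourier bounds and balancing the truncation parameter $K$ against the resulting error: too small a $K$ makes the Hensel/Newton reduction lossy, while too large a $K$ weakens the Fourier estimate. The unusual exponent $1/4$ in the error term should emerge from this optimization, together with tail bounds on the probability that $N_a$ depends genuinely on digits beyond level $K$, i.e.\ that several consecutive $c_k$ happen to be divisible by a large power of $p$.
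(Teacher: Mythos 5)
Your strategy is essentially the paper's: decompose by residue class mod $p$, absorb the $p\ZZ_p$ coset into the $f_0$ term, substitute $X=a+pY$ on the unit cosets, control the Taylor coefficients $f^{(i)}(a)/i!$ modulo $p^K$ by Fourier mixing, and combine with Hensel's lemma to extract $1/(p+1)$ per unit coset. The cutoff $K\asymp\tfrac{\log n}{2\log p}$ and the resulting Henselian-truncation error $p^{-K/2}\asymp n^{-1/4}$ are indeed the source of the exponent $1/4$, as you anticipate.

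One step of your sketch would fail as stated, and one needs care. You suggest that characters trivial mod $p$ but nontrivial at higher $p$-adic scales should be handled by a ``multi-scale treatment'' giving a ``power-saving'' total-variation bound. A power-saving bound (any fixed power $n^{-A}$) is \emph{not} sufficient here: the number of residue classes for the coefficient vector modulo $p^K$ is of order $p^{K(K+1)/2}=\exp\bigl(\Theta(\log^2 n)\bigr)$, which is super-polynomial in $n$, so a polynomially small per-class deviation does not survive the sum over classes. What is actually needed, and what Proposition~\ref{prop:zero-random-walk} (via Lemma~\ref{lem:1-step-fourier-bound}) delivers, is a single Fourier estimate $\exp(-\tau\sigma/q^{2})$ valid uniformly for \emph{every} nonzero character, primitive or not, with no case split on divisibility by $p$; with $q=p^{M}$ and $M\asymp\log n$ this gives a stretched-exponential deviation $\exp(-cn^{\delta})$, which comfortably beats $p^{K(K+1)/2}$. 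Relatedly, importing the value $1/(p+1)$ from the Haar computation \eqref{eq:haar-result} by translation invariance is a reasonable heuristic, but approximate uniformity of $g_a\bmod p^{K}$ is not the same as proximity to the full Haar law: you still must establish the quantitative statement that an \emph{exactly} uniform polynomial on the truncated space has expected root count $1/(p+1)+O(p^{-cK})$, and the bad events (non-simple roots mod $p^{K}$, and $g_a\equiv 0\bmod p^{K}$) that Lemma~\ref{lem:non-simple-root} and Proposition~\ref{prop:ex-ups-root-count} bound are precisely where that truncation error lives; the paper computes this from the expected number of primitive Henselian roots rather than appealing to Haar, but your route can be made to work once these estimates are supplied.
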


Equation \eqref{eq:main-result} is compatible with equation \eqref{eq:haar-result}.
In equation \eqref{eq:haar-result} each residue class modulo $p$ contributes $1/\pa*{p+1}$ to the number of roots (see Appendix \ref{app:haar-random-polynomial}).
In equation \eqref{eq:main-result} the non-zero residue classes modulo $p$ also  contribute $1/\pa*{p+1}$, up to an error term, in contrast the zero class contributes $\rCountF[\ZZ_p]{f_0}$.

Next we compare the $p$-adic case, i.e., equation~\eqref{eq:main-result}, with the finite field case, that is, equation~\eqref{eq:int:finte-field}.
The first term in each equation, $\Ex\br*{\rCountF[\ZZ_p]{f_0}}$ or $\Pr\pa*{\rC_i = 0}$, are the expected number of roots of $f$ which $\equiv 0 \pmod{p}$.
The second terms correspond to the number of roots in the respective fields that are $\not \equiv 0\pmod p$.
In equation \eqref{eq:int:finte-field} non-zero elements contribute to the main term $1/p$ while in equation \eqref{eq:main-result} only $1/(p+1)$.
There is a difference of roughly $1/p^2$ between the second terms due to subtle issues coming from non-simple roots.
Those same issues also cause the error term in equation \eqref{eq:main-result} to be bigger than in equation \eqref{eq:int:finte-field}.

Finally, we compare the $p$-adic and real cases.
The term $\Ex\br*{\rCountF[\ZZ_p]{f_0}}$ in equation \eqref{eq:main-result} is easy to compute in many cases.
It has the following upper bound:
\begin{equation}
	\Ex\br*{\rCountF[\ZZ_{p}]{f_{0}}}\le\frac{\Pr\pa*{\rC_{0}\equiv0\pmod{p}}}{\Pr\pa*{\rC_{0}\not\equiv0\pmod{p}}}
\text{,}\label{eq:f0-bound}
\end{equation}
see Section~\ref{sec:additional-results}. In particular, $\Ex\br*{\rCountF[\QQ_p]{f}}$ is bounded as $n\to\infty$ in contrast to $\Ex\br*{\rCountF[\RR]{f}}$.

Next, we apply Theorem~\ref{thm:main} to interesting distributions.
\begin{cor}
\label{cor:examples}
Let $f\pa*{X}=\rC_{0}+\rC_{1}X+\dots+\rC_{n-1}X^{n-1}+X^{n}$
where $\rC_{0},\dots,\rC_{n-1}$ are i.i.d. random variables taking values in $\ZZ_{p}$.
\begin{enumerate}
\item Assume each $\rC_i$ takes the values $\pm1$ each with equal probability and $p>2$. Then,
\[
\Ex\br*{\rCountF[\QQ_{p}]f}=\frac{p-1}{p+1}+O\pa*{n^{-1/4+\varepsilon}},  \qquad n\to \infty.
\]
\item Assume each $\rC_{i}$ takes the values $0$ or $1$ each with equal probability. Then,
\[
  \Ex\br*{\rCountF[\QQ_{p}]f}=\frac{3p-1}{2\pa*{p+1}}+O\pa*{n^{-1/4+\varepsilon}},\qquad n\to \infty.
\]
\item Assume each $\rC_{i}$ takes the  values $\set*{ 0,\dots,p-1}$ uniformly. Then,
\[
  \Ex\br*{\rCountF[\QQ_{p}]f}=\frac{p^{2}+1}{p\pa*{p+1}}+O\pa*{n^{-1/4+\varepsilon}},\qquad n\to \infty.
\]
\end{enumerate}
\end{cor}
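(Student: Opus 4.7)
The plan is to apply Theorem~\ref{thm:main} to each distribution. The hypothesis that $\rC_i \bmod p$ is non-constant is easy to verify in all three cases (in~(1) the two residues $\pm 1$ differ because $p > 2$; in~(2) and~(3) the law of $\rC_i \bmod p$ has support $\set*{0, 1}$ and $\set*{0, \ldots, p-1}$ respectively). The main task is then to evaluate $\Ex\br*{\rCountF[\ZZ_p]{f_0}}$, which equals the expected number of roots of $f$ in $p\ZZ_p$ via the change of variable $y = p x$.

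I would handle case~(1) first using the upper bound \eqref{eq:f0-bound}: since $\rC_0 = \pm 1$ and $p > 2$, one has $\Pr\pa*{\rC_0 \equiv 0 \pmod{p}} = 0$, so $\Ex\br*{\rCountF[\ZZ_p]{f_0}} = 0$ and the result follows immediately from Theorem~\ref{thm:main}.

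For cases~(2) and~(3), the approach is to set up a simple recursion. Let $q := \Pr\pa*{\rC_0 = 0}$, which equals $1/2$ in case~(2) and $1/p$ in case~(3); for both laws, $\rC_0 \equiv 0 \pmod{p}$ happens exactly when $\rC_0 = 0$. Write $a_n$ for the expected number of roots in $p\ZZ_p$ of a monic degree-$n$ polynomial with i.i.d.\ coefficients drawn from the given distribution. Since any root in $p\ZZ_p$ requires $\rC_0 \equiv 0 \pmod{p}$, I would condition on $\rC_0 = 0$ and write $f\pa*{X} = X \tilde f\pa*{X}$ with $\tilde f\pa*{X} = \rC_1 + \dots + \rC_{n-1} X^{n-2} + X^{n-1}$, a monic polynomial of degree $n - 1$ whose coefficients are i.i.d.\ with the same law. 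The roots of $f$ in $p\ZZ_p$ are then $0$ together with the non-zero roots of $\tilde f$ in $p\ZZ_p$, and the expected count of the latter is $a_{n-1} - \Pr\pa*{\rC_1 = 0}$. This yields, for $n \ge 2$,
\[
a_n = q\pa*{1 + a_{n-1} - q},
\]
with $a_1 = q$ by direct inspection. Induction then gives $a_n = q$ for all $n \ge 1$, so substituting $q = 1/2$ in case~(2) and $q = 1/p$ in case~(3) into Theorem~\ref{thm:main} yields the claimed formulas $\frac{3p-1}{2\pa*{p+1}}$ and $\frac{p^2 + 1}{p\pa*{p+1}}$.

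There is no serious obstacle here; the only point that requires care is the bookkeeping in the recursion, namely that the origin is counted only once as a root of $f = X\tilde f$ even when $\tilde f\pa*{0} = \rC_1 = 0$ produces a double root at $0$. This is precisely what justifies the subtraction of $\Pr\pa*{\rC_1 = 0}$ when passing from roots of $\tilde f$ in $p\ZZ_p$ to its non-zero roots in $p\ZZ_p$.
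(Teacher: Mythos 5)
Your proposal is correct, and it takes a genuinely different route from the paper on the heart of the matter. The paper deduces Corollary~\ref{cor:examples} from Theorem~\ref{thm:main} together with Proposition~\ref{prop:better-root-count}, and the latter is proved (Lemma~\ref{lem:no-non-zero-roots}) by a \emph{deterministic} argument: one assumes $f_0$ has a non-zero root $\alpha$, uses the $p$-adic valuation of $\alpha$ and successive reductions modulo powers of $p$ to force $\rC_0 = \rC_1 = \dots = \rC_{n-1} = 0$ almost surely (inducting on the coefficient index), and arrives at a contradiction, so that $f_0$ has no non-zero root a.s.\ and hence $\Ex\br*{\rCountF[\ZZ_p]{f_0}} = \Pr\pa*{\rC_0 = 0}$. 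You instead induct on the \emph{degree}: conditioning on $\rC_0 = 0$, factoring $f = X\tilde f$, and bookkeeping the origin gives the first-order recursion $a_n = q\pa*{1 + a_{n-1} - q}$ with $a_1 = q$, whose unique solution is $a_n = q$. Your bookkeeping (subtracting $\Pr\pa*{\rC_1 = 0}$ to pass from all roots of $\tilde f$ in $p\ZZ_p$ to the non-zero ones, so the origin is not double-counted) is exactly right, and the recursion in fact reproves Proposition~\ref{prop:better-root-count} in its full generality, since it only uses the hypothesis that $\rC_i \equiv 0 \pmod p$ implies $\rC_i = 0$. The trade-off: the paper's lemma establishes the stronger pointwise fact that $f_0$ has no non-zero root a.s., whereas your recursion only evaluates the expectation — but for the corollary that is all that is needed, and your argument is shorter and uses nothing beyond linearity of expectation and the factorization. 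For case~(1) both you and the paper handle it immediately, you via the bound~\eqref{eq:f0-bound} and the paper via Proposition~\ref{prop:better-root-count} applied vacuously; either is fine.
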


Corollary~\ref{cor:examples} follows immediately from  Theorem~\ref{thm:main} and from
\begin{prop}
\label{prop:better-root-count}If $\rC_{i} = 0$ almost surely conditioned on $p \mid \rC_i$, then
\[
\Ex\br*{\rCountF[\ZZ_{p}]{f_{0}}}=\Pr\pa*{\rC_{0}=0}\text{.}
\]
\end{prop}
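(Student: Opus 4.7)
The plan is to reduce counting roots of $f_0$ in $\ZZ_p$ to counting roots of $f$ in $p\ZZ_p$, use the hypothesis to pin those down exactly, and then take expectations. The substitution $x \mapsto px$ is a bijection from $\ZZ_p$ onto $p\ZZ_p$ under which $x$ is a root of $f_0$ iff $px$ is a root of $f$, so $\rCountF[\ZZ_p]{f_0}$ equals the number of roots of $f$ in $p\ZZ_p$. It therefore suffices to show that, almost surely, this latter count equals $\ind{\rC_0 = 0}$, and to take expectations at the end.

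The key observation is that any $y \in p\ZZ_p$ satisfies $f\pa*{y} \equiv \rC_0 \pmod{p}$. Hence any root of $f$ in $p\ZZ_p$ forces $p \mid \rC_0$, which by the hypothesis means $\rC_0 = 0$ almost surely. Thus on the event $\rC_0 \neq 0$ there are no such roots, matching $\ind{\rC_0 = 0} = 0$.

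On the event $\rC_0 = 0$, I would let $k$ be the smallest index in $\set*{0, 1, \dots, n}$ with $\rC_k \neq 0$; such a $k$ exists because $\rC_n = 1$. Factoring $f\pa*{X} = X^k h\pa*{X}$ with $h\pa*{X} = \rC_k + \rC_{k+1} X + \dots + X^{n-k}$, any non-zero $y \in p\ZZ_p$ with $f\pa*{y} = 0$ would also satisfy $h\pa*{y} = 0$; but $h\pa*{y} \equiv \rC_k \pmod{p}$, and the hypothesis combined with $\rC_k \neq 0$ gives $p \nmid \rC_k$, so $h\pa*{y} \neq 0$. Therefore the only root of $f$ in $p\ZZ_p$ is $0$, contributing exactly $1 = \ind{\rC_0 = 0}$. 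Taking expectations then yields $\Ex\br*{\rCountF[\ZZ_p]{f_0}} = \Pr\pa*{\rC_0 = 0}$. There is no real obstacle: the hypothesis cleanly turns $\set*{p \mid \rC_i}$ into $\set*{\rC_i = 0}$ up to null sets, which is exactly what makes the mod-$p$ reduction decisive at every stage.
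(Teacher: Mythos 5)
Your proof is correct, and it takes a genuinely more direct route than the paper's. The paper first proves a separate lemma that $f_0$ has no non-zero roots in $\ZZ_p$ almost surely, and does so by a full induction on $i = 0, 1, \dots, n-1$: after writing $\alpha = p^v\tilde\alpha$ for the hypothetical non-zero root, it reduces $f_0(\alpha)=0$ modulo $p^{vi+i+1}$ to force $p \mid \rC_i$, hence $\rC_i = 0$, step by step, eventually concluding $f_0(X) = p^n X^n$ and deriving a contradiction. You avoid that induction entirely: you translate to roots of $f$ in $p\ZZ_p$, observe that any such root forces $\rC_0 = 0$, and on that event locate the \emph{first} nonzero index $k$, factor $f(X) = X^k h(X)$, and kill all non-zero roots in one stroke because $h(y) \equiv \rC_k \not\equiv 0 \pmod p$ for $y \in p\ZZ_p$. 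Both arguments hinge on the same arithmetic fact (the constant term controls the value modulo $p$ on $p\ZZ_p$), but yours replaces the $n$-step descent with a single well-chosen factorization and sidesteps the bookkeeping with the valuation $v$ of $\alpha$; it is arguably the cleaner argument. The only thing worth making explicit is that you should first pass to the full-probability event on which $p \mid \rC_i \Rightarrow \rC_i = 0$ holds simultaneously for all $i$, so that the random index $k$ (the first $i$ with $\rC_i \ne 0$) can be used without any almost-sure caveats mid-argument.
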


\subsection{Outline of the proof of Theorem~\ref{thm:main}}
From now on, we abbreviate and write
\[
	\rCount f=\rCountF[\ZZ_{p}]f.
\]
For monic $f$, we have $\rCountF[\ZZ_p] f =\rCountF[\QQ_p] f$.

A first observation is that by grouping the roots according to their value modulo $p$ we have
\begin{equation}\label{prop:root-count-split}
	\rCount f = \sum_{r=0}^{p-1}\rCount{f_{r}},
\end{equation}
where $f_{r}\pa*{X}=f\pa*{r+pX}$. So we can treat each $f_r$ separately. The case of $r=0$, gives the term $\Ex\br*{\rCount{f_0}}$.

Take $r\ne 0$ and consider the set $\Upsilon_k$ of all polynomials of the form $g\pa*{pX}\bmod p^k$, see equation~\eqref{eq:def_upsilon}. We prove that  $f_{r}\bmod{p^{k}}$ is distributed uniformly on $\Upsilon_{k}$  up to an exponentially small error (see Lemma~\ref{lem:ups-equidist}).

Applying Hensel's lemma, this gives the estimate for $\Ex\br*{\rCount{f_r}}=\frac{1}{p+1} +O\pa*{p^{-\pa*{1-\varepsilon}k/2}}$ (see Proposition~\ref{prop:ups-root-count-rephrased}). Taking $k=\Theta\pa*{\log n}$ and summing over all $r\neq 0$, complete the proof of the theorem.

Let us elaborate on the part of the uniform distribution of $f_{r}\bmod{p^{k}}$. We define a random walk on the additive group $\pa*{\Rmod{p^{k}}}^{k}$ whose $n$-th step gives the first $k$ coefficients of $f_r$, see equation~\eqref{eq:poly-taylor-coeff-vector}.
Then, we take ideas from the works of Chung, Diaconis and Graham \cite{chung_random_1987} and of Breuillard and Varj\'u \cite{breuillard_irreducibility_2019}, using Fourier analysis and convolution properties to show that the random walk "mixes" in the group, for $k = O\pa*{\log n}$  (see Proposition~\ref{prop:zero-random-walk}).

The paper is structured as follows. Section~\ref{sec:p-adic-numbers}
surveys the $p$-adic numbers.
 Section~\ref{sec:upsilon-space} introduces $\Upsilon_{k}$ and proves Proposition~\ref{prop:ups-root-count-rephrased}.
In Section~\ref{sec:random-walks}, we study the random walks in general, and in  Section~\ref{sec:fr-distribution}, we connect the random walks to polynomials modulo $p^k$.
We prove Theorem~\ref{thm:main} in Section~\ref{sec:main-theorem}.
Finally, in Section~\ref{sec:additional-results} we prove equation~\eqref{eq:f0-bound} and Proposition~\ref{prop:better-root-count}.

\begin{figure}[H]
\centering
\begin{tikzpicture}
[>=stealth, every node/.style={rectangle, draw, minimum size=0.75cm,
minimum width=2.75cm
}]
\def \h {1.2}

\node (better-root-count) at (4,0)  {Corollary~\ref{cor:examples}};

\node (main) at (2,1*\h)  {Theorem~\ref{thm:main}};
\node (zero-root-count) at (6,1*\h)  {Proposition~\ref{prop:better-root-count}};

\node (ups-equidist) at (4,2*\h)  {Lemma~\ref{lem:ups-equidist}};
\node (ups-root-count-rephrased) at (0,2*\h)  {Proposition~\ref{prop:ups-root-count-rephrased}};

\node (coeff-prob) at (4,3*\h)  {Proposition~\ref{prop:coeff-prob}};

\node (zero-random-walk) at (4,4*\h)  {Proposition~\ref{prop:zero-random-walk}};


\draw[->] (main) to (better-root-count);
\draw[->] (zero-root-count) to (better-root-count);

\draw[->] (ups-root-count-rephrased) to (main);
\draw[->] (ups-equidist) to (main);

\draw[->] (coeff-prob) to (ups-equidist);

\draw[->] (zero-random-walk) to (coeff-prob);
\end{tikzpicture}
\caption{Main Lemmas and Theorems diagram.}
\end{figure}
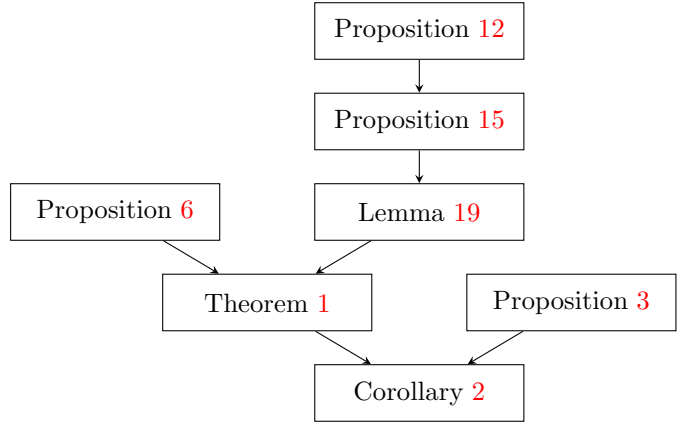

\subsection*{Acknowledgments}
I would like to thank my supervisor,  Lior Bary-Soroker, for his guidance, patience and time.
Eli Glasner for his support in the research.
Gady Kozma and  Ron Peled for their advice and comments on the research.

This research was partially supported by a grant of the Israel Science Foundation, grant no.\  702/19.
\section{The \texorpdfstring{$p$}{p}-adic numbers}
\label{sec:p-adic-numbers}

For a fixed prime number
$p$, we can write any non-zero rational number $r\in\QQ^{\times}$
as $r=p^{t}\cdot a/b$ such that $a, b, t \in \ZZ$ and $p\nmid a,b$.
We use this factorization to define \emph{the $p$-adic absolute value}:
\begin{equation*}
\abs*{r}_{p}= \begin{cases}
p^{-t}, &r\ne 0, \\
0, &r = 0.
\end{cases}
\end{equation*}
The absolute value $|\cdot|_{p}$ satisfies:
\begin{equation}
\label{eq:abs-props}
\begin{split}
	&\abs*{r}_p \ge 0 \qquad\text{and}\qquad\abs*{r}_{p} =0\iff r=0\text{,} \\
	&\abs*{r_{1}r_{2}}_{p}  =\abs*{r_{1}}_{p}\abs*{r_{2}}_{p}\text{,}\\
	&\abs*{r_{1}+r_{2}}_{p} \le\max\pa*{\abs*{r_{1}}_{p},\abs*{r_{2}}_{p}}\text{.}
\end{split}
\end{equation}

We define \emph{the field of $p$-adic numbers}, denoted by $\QQ_{p}$, as the completion of $\QQ$ with respect to $|\cdot |_p$. We define \emph{the ring of $p$-adic integers}, denoted by $\ZZ_p$, as the topological closure of $\ZZ$ in $\QQ_{p}$.
Then,
\[
  \alpha \in \ZZ_p \iff \abs*{\alpha}_p \le 1\text{.}
\]

The ring $\ZZ_p$ is local with  maximal
ideal $p\ZZ_{p}$. All the non-zero ideals are of the form
$p^{k}\ZZ_{p}$ for some integer $k\geq 0$. The quotient ring
$\ZZ_{p}/p^{k}\ZZ_{p}$ is canonically isomorphic to the ring $\ZZ/p^{k}\ZZ$.
Therefore we use the
notation of reduction modulo $p^{k}$ as in the integers, i.e., for $\alpha,\beta\in \ZZ_p$ we write
\[
\alpha\equiv\beta\pmod{p^{k}}\iff\alpha-\beta\in p^{k}\ZZ_{p}\text{.}
\]
Note that $\alpha\equiv\beta\pmod{p^{k}}\iff\abs*{\alpha-\beta}_{p}\le p^{-k}$
and that $\alpha=0\iff\alpha\equiv0\pmod{p^{k}}$ for all $k\ge 1$.

Our proof utilizes the following version of Hensel's lemma, see \cite[Theorem~II.4.2]{bachman_introduction_1964}, \cite[Proposition~II.2]{lang_algebraic_1970}, \cite[Theorem~7.3]{eisenbud_commutative_2013} or \cite[Theorem~4.1]{conrad_hensel_xxxx} for slightly weaker versions:
\begin{thm}
\label{thm:Hensel-lemma}If $f\pa*{X}\in\ZZ_{p}\br*{X}$
and $r\in\Rmod{p^{2k}}$ satisfies
\begin{equation}
\label{eq:hensel-requirements}
f\pa*{r}\equiv0\pmod{p^{2k}},\qquad f'\pa*{r}\not\equiv0\pmod{p^{k}}
\end{equation}
then $r$ can be lifted uniquely from $\Rmod{p^{k}}$ to a root of
$f$ in $\ZZ_{p}$, i.e., there is a unique $\alpha\in\ZZ_{p}$ such
that $f\pa*{\alpha}=0$ and $\alpha\equiv r\pmod{p^{k}}$.
\end{thm}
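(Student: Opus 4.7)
The plan is to construct $\alpha$ by Newton's iteration, exploiting the completeness of $\ZZ_p$. The hypotheses translate to $\abs*{f(r)}_p \le p^{-2k}$ and $\abs*{f'(r)}_p \ge p^{1-k}$, which together furnish the strict inequality $\abs*{f(r)}_p < \abs*{f'(r)}_p^{2}$ — the classical Newton regime that drives quadratic convergence.

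First I would lift $r$ to an element $\alpha_0 \in \ZZ_p$ (for instance the integer representative in $\set*{0, 1, \dots, p^{2k}-1}$), set $p^{-s} := \abs*{f'(\alpha_0)}_p$ so that $s \le k - 1$, and define the Newton iterates
\[
\alpha_{n+1} = \alpha_n - \frac{f(\alpha_n)}{f'(\alpha_n)}.
\]
The heart of the argument is the inductive claim that, for every $n \ge 0$, we have $\alpha_n \in \ZZ_p$, $\abs*{f'(\alpha_n)}_p = p^{-s}$, and $\abs*{f(\alpha_n)}_p \le p^{-(2k+n)}$. The only tool needed is the Taylor expansion, valid in $\ZZ_p[X,h]$ because $f \in \ZZ_p[X]$:
\[
f(X + h) = f(X) + h f'(X) + h^{2} G(X, h), \qquad f'(X + h) = f'(X) + h H(X, h),
\]
with $G, H \in \ZZ_p[X, h]$. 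Applied at $X = \alpha_n$ and $h_n := -f(\alpha_n)/f'(\alpha_n)$, whose absolute value is $\le p^{-(2k+n-s)} < p^{-s}$, the ultrametric inequality yields $\abs*{f'(\alpha_{n+1})}_p = p^{-s}$, while the choice of $h_n$ cancels the linear term and gives $\abs*{f(\alpha_{n+1})}_p \le \abs*{h_n}_p^{2} \le p^{-2(2k+n-s)} \le p^{-(2k+n+1)}$, closing the induction.

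The same invariants supply $\abs*{\alpha_{n+1} - \alpha_n}_p = \abs*{h_n}_p \le p^{-(k+1+n-s)} \to 0$, so $\pa*{\alpha_n}$ is Cauchy in the complete space $\ZZ_p$; its limit $\alpha$ satisfies $f(\alpha) = 0$ by continuity and $\alpha \equiv \alpha_0 \equiv r \pmod{p^k}$ by the ultrametric inequality applied to the telescoping sum.

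For uniqueness, suppose $\beta \in \ZZ_p$ is another root with $\beta \equiv r \pmod{p^k}$. Setting $\delta = \beta - \alpha$, we have $\abs*{\delta}_p \le p^{-k}$, and Taylor expansion gives $0 = f(\alpha + \delta) = \delta\pa*{f'(\alpha) + \delta g}$ for some $g \in \ZZ_p$. Since $\abs*{\delta g}_p \le p^{-k} < p^{-s} = \abs*{f'(\alpha)}_p$, the second factor has absolute value exactly $p^{-s}$ and is therefore nonzero in $\QQ_p$, forcing $\delta = 0$. The only delicate point throughout is to track $\abs*{f(\alpha_n)}_p$ and $\abs*{f'(\alpha_n)}_p$ simultaneously along the iteration; the strict inequality $\abs*{f(r)}_p < \abs*{f'(r)}_p^{2}$ supplied by the hypothesis is precisely the buffer needed to keep both invariants propagating indefinitely.
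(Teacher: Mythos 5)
Your proof is correct, but it takes a genuinely different route from the paper's. The paper proves existence by invoking a black-box formulation of Hensel's lemma (Conrad, Theorem 4.1), which directly produces a root $\alpha$ with $\abs*{\alpha - r}_p = \abs*{f(r)/f'(r)}_p$, and then proves uniqueness by an induction showing $\alpha \equiv \alpha' \pmod{p^i}$ for all $i \ge k$, using a Taylor expansion modulo $p^{2i}$. You instead construct $\alpha$ from scratch by Newton iteration, tracking the twin invariants $\abs*{f(\alpha_n)}_p \le p^{-(2k+n)}$ and $\abs*{f'(\alpha_n)}_p = p^{-s}$, and your uniqueness argument is a single Taylor-plus-ultrametric step using the fact that $\abs*{f'(\alpha)}_p = p^{-s} > p^{-k} \ge \abs*{\delta g}_p$. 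Your approach is self-contained and arguably cleaner on the uniqueness side; the paper's is shorter because it delegates the analytic work to the cited theorem.

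One small slip worth fixing: in the Cauchy-sequence sentence you restate the increment bound as $\abs*{h_n}_p \le p^{-(k+1+n-s)}$, which is true but strictly weaker than the bound $\abs*{h_n}_p \le p^{-(2k+n-s)}$ you established in the inductive step. The weaker bound does not suffice to conclude $\alpha \equiv \alpha_0 \pmod{p^k}$: at $n=0$ it only gives $\abs*{\alpha - \alpha_0}_p \le p^{-(k+1-s)}$, and when $s$ is close to $k-1$ this can be as large as $p^{-2}$. You should instead invoke the tighter bound directly: $\abs*{h_n}_p \le p^{-(2k+n-s)} \le p^{-(k+1)}$ for all $n \ge 0$ (since $s \le k-1$), whence the telescoping ultrametric estimate gives $\abs*{\alpha - \alpha_0}_p \le p^{-(k+1)} < p^{-k}$ as needed. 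With that substitution the argument is complete.
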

\begin{proof}
We start with proving the existence.
By abuse of notation, we denote by $r$ a lifting of $r$ to $\ZZ_p$.
Equation \eqref{eq:hensel-requirements} gives
\begin{equation*}
    \abs*{\frac{f\pa*{r}}{f'\pa*{r}^2}}_p < \frac{p^{-2k}}{\pa*{p^{-k}}^2} = 1,
\end{equation*}
and by \cite[Theorem 4.1]{conrad_hensel_xxxx} there exists a root $\alpha \in \ZZ_p$ of $f$ such that $\abs*{\alpha - r}_p = \abs*{f\pa*{r} / f'\pa*{r}}_p$.
We use equation \eqref{eq:hensel-requirements} again to infer that
\begin{equation*}
    \abs*{\alpha - r}_p = \abs*{\frac{f\pa*{r}}{f'\pa*{r}}}_p < \frac{p^{-2k}}{p^{-k}} = p^{-k}.
\end{equation*}
Thus $\alpha \equiv r \pmod{p^k}$.

To prove the uniqueness of $\alpha$, let $\alpha' \in\ZZ_p$ be another root of $f$ such that $\alpha' \equiv r \pmod{p^k}$.
We proceed by induction on $i$ to prove that $\alpha \equiv \alpha' \pmod{p^i}$.
For $i \le k$, the claim follows immediately from $\alpha' \equiv r \equiv \alpha \pmod{p^k}$.

Let $i \ge k$, we assume that $\alpha \equiv \alpha' \pmod{p^i}$ and we prove that $\alpha \equiv \alpha' \pmod{p^{i+1}}$.
So there exists $\beta \in \ZZ_p$ such that $\alpha' = \alpha + p^i \beta$.
Taylor's expansion gives
\begin{equation}
\label{eq:hensel-taylor}
    f\pa*{\alpha'} = f\pa*{\alpha + p^i \beta} = f\pa*{\alpha} + f'\pa*{\alpha} p^i \beta + \frac{1}{2} f''\pa*{\alpha} p^{2i} \beta^2 + \dots\qquad.
\end{equation}
The elements $f^{\pa{n}}\pa*{\alpha}/n!$ are all $p$-adic integers since they are the coefficients of the polynomial $f\pa*{\alpha + X} \in \ZZ_p\br*{X}$.
Hence reducing equation \eqref{eq:hensel-taylor} modulo $p^{2i}$ eliminates all the terms on the right hand side except possibly the first two, i.e.
\begin{equation}
\label{eq:hensel-taylor-modulo}
    f\pa*{\alpha'} \equiv f\pa*{\alpha} + f'\pa*{\alpha} p^i \beta \pmod{p^{2i}}.
\end{equation}
Since $f\pa*{\alpha} = f\pa*{\alpha'} = 0$, we can divide equation \eqref{eq:hensel-taylor-modulo} by $p^i$, so we get
\begin{equation*}
    0 \equiv f'\pa*{\alpha} \beta \pmod{p^{i}}.
\end{equation*}
Therefore $p \mid \beta$, otherwise $p^k \mid p^i \mid f'\pa*{\alpha}$ which contradicts $f'\pa*{\alpha} \equiv f'\pa*{r} \not\equiv 0 \pmod{p^k}$.
Thus $p^{i+1} \mid p^i \beta$ and $\alpha' = \alpha + p^i \beta \equiv \alpha \pmod{p^{i+1}}$.
\end{proof}

Further, in this paper, we use the following proposition:
\begin{prop}
\label{prop:roots-in-zp}
If $f\not\equiv0\pmod{p^{k}}$ and $\deg\pa*{f\bmod p^{k}}<m$,
then $f$ has at most $m-1$ distinct roots in $\ZZ_{p}$.
\end{prop}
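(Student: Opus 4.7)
The plan is to bound the number of distinct roots by factoring them out and comparing degrees after reduction modulo $p^k$.

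First, suppose $\alpha_1,\ldots,\alpha_s\in\ZZ_p$ are distinct roots of $f$. Since each $X-\alpha_i$ is monic, Euclidean division in $\ZZ_p[X]$ by $X-\alpha_1$ produces a quotient in $\ZZ_p[X]$ and a constant remainder which must equal $f(\alpha_1)=0$; so $f(X) = (X-\alpha_1)f_1(X)$ with $f_1\in\ZZ_p[X]$. For $i\ge 2$, the distinctness of the $\alpha_j$ ensures $f_1(\alpha_2)=0$, and iterating gives a factorization
\[
  f(X) = (X-\alpha_1)(X-\alpha_2)\cdots(X-\alpha_s)\,g(X), \qquad g\in\ZZ_p[X].
\]

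Next, I would reduce this identity modulo $p^k$, obtaining an equality in $(\Rmod{p^{k}})[X]$. If $g\equiv0\pmod{p^k}$ then $f\equiv0\pmod{p^k}$, contradicting the hypothesis; hence $\bar g := g\bmod p^k$ is a nonzero polynomial of some degree $d\ge0$. The product $\prod_{i=1}^s(X-\bar\alpha_i)$ is monic of degree $s$ in $(\Rmod{p^{k}})[X]$, so its leading coefficient is $1$, and multiplying by $\bar g$ yields a polynomial whose leading coefficient equals the leading coefficient of $\bar g$ (which is nonzero in $\Rmod{p^{k}}$). Consequently
\[
  \deg\pa*{f\bmod p^{k}} = s + d \ge s.
\]

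Combining with the hypothesis $\deg(f\bmod p^k)<m$ gives $s<m$, i.e.\ $s\le m-1$, as required.

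The only genuine subtlety is guaranteeing that the product $\prod(X-\bar\alpha_i)\,\bar g$ does not suffer degree-collapse after reducing modulo $p^k$, which is precisely why one must rule out $\bar g=0$ using the assumption $f\not\equiv 0\pmod{p^k}$; everything else is routine polynomial arithmetic. No finer information about $\ZZ_p$ (such as Hensel's lemma) is needed.
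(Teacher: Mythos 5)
Your proof is correct and takes essentially the same route as the paper: factor out the distinct roots to write $f = \prod_i(X-\alpha_i)\cdot g$ with $g\in\ZZ_p[X]$ (the paper divides by the monic product $\prod_i(X-\alpha_i)$ in one step rather than iteratively), then reduce modulo $p^k$ and compare degrees, noting that the monic factor prevents degree collapse and that $\bar g\neq 0$ because $f\not\equiv0\pmod{p^k}$. The only cosmetic difference is that the paper phrases it as a contrapositive with exactly $m$ roots, while you run the argument with $s$ roots and conclude $s<m$.
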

\begin{proof}
We prove this proposition by contrapositive.
Let $\alpha_{1},\dots,\alpha_{m}$ be roots of $f$ in $\ZZ_{p}$ and $h\pa*{X} = \prod_{i=1}^m\pa*{X-\alpha_i}$. Dividing $f$ by $h$ with remainder in $\QQ_p[X]$, gives
\[
	f\pa*{X}=h\pa*{X}q\pa*{X}
\text{.}\]
Since $h$ is monic, $q\in \ZZ_p\br*{X}$. Reducing $f=hq$ modulo $p^k$ gives the assertion.
\end{proof}

\section{A space of polynomials modulo \texorpdfstring{$p^k$}{p\string^k}}
\label{sec:upsilon-space}

Consider the subset $\Upsilon_{k}$ of $\Rmod{p^{k}}\br*{X}$ defined by:
\begin{equation}\label{eq:def_upsilon}
	\Upsilon_{k}=\set*{ \sum_{i=0}^{k-1}c_{i}p^{i}X^{i}\in\Rmod{p^{k}}\br*{X}:\forall i<k,c_{i}\in\Rmod{p^{k-i}}} \text{.}
\end{equation}
As $\Upsilon_{k}$ is in a natural bijection with $\Rmod{p^{k}}\times\dots\times\Rmod p$,
we have
\begin{equation}
\#\Upsilon_{k}=\prod_{i=0}^{k-1}p^{k-i}=p^{k\pa*{k+1}/2}
\label{eq:ups-size-1}\text{.}
\end{equation}

\begin{prop}
\label{prop:ups-root-count-rephrased}Let $\varepsilon>0$ and let
$g\in\ZZ_{p}\br*{X}$ be a random polynomial such that $\deg g\le p^{4k}$ almost surely and such that $g\bmod{p^{k}}$ is distributed uniformly in $\Upsilon_{k}$.
Then we have
\[
\Ex\br*{\rCount g}=\frac{1}{p+1}+O\pa*{p^{-\pa*{1-\varepsilon}k/2}}
\]
as $k\to\infty$.
\end{prop}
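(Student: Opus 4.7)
The plan is to classify roots of $g$ by the $p$-adic size of $g'$ at them, use Hensel's lemma (Theorem~\ref{thm:Hensel-lemma}) to match the ``good'' ones with residue classes modulo $p^j$ where $j=\lfloor k/2\rfloor$, and exploit the explicit uniform distribution on $\Upsilon_k$. Since every $X^i$-coefficient of $g\bmod p^k$ with $i\ge 1$ is divisible by $p$, differentiation gives $v_p(g'(\alpha))\ge 1$ for every $\alpha\in\ZZ_p$; call a root $\alpha$ \emph{Hensel} if $v_p(g'(\alpha))<j$ and \emph{exceptional} otherwise.

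Taylor expansion at a Hensel root $\alpha$ shows its class $s:=\alpha\bmod p^j$ satisfies $\ell:=v_p(g'(s))<j$ and $v_p(g(s))\ge j+\ell$. Conversely, for any such $s$, solving the linear equation $g(s)+g'(s)p^jt\equiv 0\pmod{p^{2j}}$ for $t$ yields some $r\equiv s\pmod{p^j}$ in $\Rmod{p^{2j}}$ with $g(r)\equiv 0\pmod{p^{2j}}$ and $v_p(g'(r))<j$; applying Theorem~\ref{thm:Hensel-lemma} (with its ``$k$'' taken to be our $j$; note $2j\le k$) produces a unique root $\alpha\equiv s\pmod{p^j}$, and uniqueness gives a bijection between Hensel roots of $g$ and such Hensel classes. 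Writing $g\bmod p^k=\sum_{i=0}^{k-1}c_ip^iX^i$ with $c_i$ independent and uniform on $\Rmod{p^{k-i}}$, the uniformity of $c_0$ on $\Rmod{p^k}$ makes $g(s)$ uniform on $\Rmod{p^k}$ and independent of $g'(s)$ (which involves only $c_1,c_2,\dots$), while uniformity of $c_1$ makes $g'(s)/p$ uniform on $\Rmod{p^{k-1}}$. Hence $\Pr\br*{v_p(g(s))\ge m}=p^{-m}$ for $m\le k$ and $\Pr\br*{v_p(g'(s))=\ell}=(p-1)p^{-\ell}$ for $1\le \ell\le k-1$, and by independence
\[
\Pr\br*{s\text{ is Hensel}}=\sum_{\ell=1}^{j-1}\frac{p-1}{p^\ell}\cdot p^{-(j+\ell)}=\frac{1-p^{-2(j-1)}}{p^j(p+1)}.
\]
Summing over the $p^j$ classes yields $\Ex\br*{\#\text{Hensel roots}}=(1-p^{-2(j-1)})/(p+1)=1/(p+1)+O(p^{-k})$.

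For the exceptional roots, Taylor expansion at such an $\alpha$ shows its class $s$ satisfies $v_p(g(s))\ge 2j$ and $v_p(g'(s))\ge j$; by the same independence, the expected number of such classes is $p^j\cdot p^{-2j}\cdot p^{-(j-1)}=p^{1-2j}$. Outside the negligible event $g\equiv 0\pmod{p^k}$ (of probability $p^{-k(k+1)/2}$, which kills even the crude bound $\deg g\le p^{4k}$), Proposition~\ref{prop:roots-in-zp} gives $\rCount g\le k-1$, so the number of exceptional roots is at most $(k-1)\ind{\exists\text{ exc class}}$, and Markov's inequality yields expectation at most $(k-1)p^{1-2j}=O(kp^{-k})$. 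Combining, $\Ex\br*{\rCount g}=1/(p+1)+O(kp^{-k})$, comfortably within $O(p^{-(1-\varepsilon)k/2})$ for $k$ large. The main subtlety is in setting up the bijection between Hensel roots and Hensel classes cleanly enough that the probability computation decouples: once this is done, the independence of $c_0$ from $c_1,c_2,\dots$ decouples $g(s)$ from $g'(s)$, and the main sum reduces to a geometric-series identity giving exactly $1/(p+1)$.
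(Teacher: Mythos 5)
Your proof is correct and reaches a slightly sharper error term than the statement requires. It is recognizably the same computation as the paper's, but organized differently in a way worth noting. The paper builds up an abstract notion of ``$k$-Henselian root'' (a residue class modulo $p^k$ that lifts to $\Rmod{p^{2k}}$ with the Hensel hypotheses), proves a telescoping identity $\hCount[k]{g}=\sum_{m=1}^{k}\htCount[m]{g}$ via a family of bijections (Lemma~\ref{lem:primitive-hensel-roots-sum}), computes each $\Ex\br*{\htCount[m]{g}}$ by decoupling $\rC_0$ and $\rC_1$, and finally proves an auxiliary Proposition~\ref{prop:ex-ups-root-count} under the stronger hypothesis that $g\bmod p^{2k}$ is uniform in $\Upsilon_{2k}$, recovering Proposition~\ref{prop:ups-root-count-rephrased} by substituting $k\mapsto\floor*{k/2}$. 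You bypass the telescoping and the rephrasing entirely: by fixing $j=\floor*{k/2}$ once, classifying actual roots $\alpha\in\ZZ_p$ by $\ell=v_p\pa*{g'\pa*{\alpha}}$, and setting up a single bijection between ``Hensel roots'' and residue classes $s\bmod p^j$ with $v_p\pa*{g'\pa*{s}}=\ell<j$ and $v_p\pa*{g\pa*{s}}\ge j+\ell$, the count $\Ex\br*{\#\text{Hensel roots}}=\pa*{p-1}\sum_{\ell=1}^{j-1}p^{-2\ell}$ drops out directly; the substitution $m=\ell+1$ shows this is term-for-term identical to the paper's $\sum_{m=2}^{k}\Ex\br*{\htCount[m]{g}}$. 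Your treatment of the remainder (exceptional roots with $v_p\pa*{g'}\ge j$, controlled by Proposition~\ref{prop:roots-in-zp} and the event $g\equiv 0\pmod{p^k}$) mirrors the paper's use of Lemma~\ref{lem:non-simple-root} together with the degree bound. The one point to state a bit more carefully is the independence claim: you should say explicitly that $g\pa*{s}\bmod p^k = c_0 + \pa*{\text{function of }c_1,\dots,c_{k-1}}$ and $g'\pa*{s}\bmod p^k$ depends only on $c_1,\dots,c_{k-1}$, which is what makes $g\pa*{s}\bmod p^k$ uniform on $\Rmod{p^k}$ and independent of $g'\pa*{s}$ — this is exactly the paper's ``the pair $\pa*{\rC_0\bmod p^{2m},\rC_1\bmod p^{m-1}}$ is uniform'' step, phrased at the level of $g$ rather than its coefficients. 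Overall, your route buys a cleaner bookkeeping (no telescoping lemma, no odd/even case split at the end) at the cost of needing the bijection to be set up carefully; the paper's route factors the argument into reusable pieces (Lemmas~\ref{lem:primitive-hensel-roots-sum}–\ref{lem:non-simple-root}).
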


We first introduce a definition that allows us to connect
between roots modulo $p^{k}$ and roots in $\ZZ_{p}$.
\begin{defn}
For a polynomial $g\in\ZZ_{p}\br*{X}$ we say that $x\in\Rmod{p^{k}}$
is a \emph{$k$-Henselian root of $g$} if $g'\pa*{x}\not\equiv0\pmod{p^{k}}$
and there is a lift $y$ of $x$ in $\Rmod{p^{2k}}$ such that $g\pa*{y}\equiv0\pmod{p^{2k}}$.

We say that a $k$-Henselian root $x\in\Rmod{p^{k}}$ is \emph{primitive} if $g'\pa*{x}\equiv0\pmod{p^{k-1}}$. Otherwise, we say it is \emph{non-primitive}.
\end{defn}

We denote the number of all $k$-Henselian roots of $g$ by $\hCount[k]g$.
By Hensel's lemma (Theorem~\ref{thm:Hensel-lemma}) any $k$-Henselian root can be lifted uniquely to a root in $\ZZ_{p}$, so we get that
\begin{equation}
\hCount[k]g\le C\pa*{g}
\label{eq:hensel-lemma-on-hensel-root}\text{.}
\end{equation}
We also denote
the number of all primitive $k$-Henselian roots
of $g$ by $\htCount[k]g$. So we get the following relation between them:
\begin{lem}\label{lem:primitive-hensel-roots-sum} For any polynomial $g\in\ZZ_p\br*{X}$,
\begin{equation*}
\hCount[k]g=\sum_{m=1}^{k}\htCount[m]g
\text{.}
\end{equation*}
\end{lem}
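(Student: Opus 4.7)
The plan is to partition the $k$-Henselian roots of $g$ according to the $p$-adic valuation of $g'(x)$, and to match each block in this partition bijectively with the set of primitive $m$-Henselian roots for the corresponding $m$.

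More precisely, any $k$-Henselian root $x \in \Rmod{p^k}$ satisfies $g'(x) \not\equiv 0 \pmod{p^k}$, so $v_p(g'(x)) \in \set*{0, 1, \dots, k-1}$. Writing $m = v_p(g'(x)) + 1 \in \set*{1, \dots, k}$, I would define a map $\phi$ from the set of $k$-Henselian roots to the disjoint union of the sets of primitive $m$-Henselian roots by sending $x$ to its reduction $\bar{x} := x \bmod p^m$. To see that $\phi(x)$ is indeed a primitive $m$-Henselian root, note first that $g'(\bar{x}) \equiv g'(x) \pmod{p^m}$ has valuation exactly $m-1$, giving both $g'(\bar{x}) \not\equiv 0 \pmod{p^m}$ and $g'(\bar{x}) \equiv 0 \pmod{p^{m-1}}$; second, a witness $y \in \Rmod{p^{2k}}$ with $y \equiv x \pmod{p^k}$ and $g(y) \equiv 0 \pmod{p^{2k}}$ reduces modulo $p^{2m}$ to a witness for $\bar{x}$.

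For the inverse, given a primitive $m$-Henselian root $\bar{x}$ with $m \le k$, its witness $y \in \Rmod{p^{2m}}$ satisfies the hypotheses of Hensel's lemma (Theorem~\ref{thm:Hensel-lemma}) at level $m$: $g(y) \equiv 0 \pmod{p^{2m}}$ and $g'(y) \not\equiv 0 \pmod{p^m}$. Hence there is a unique $\alpha \in \ZZ_p$ with $g(\alpha) = 0$ and $\alpha \equiv \bar{x} \pmod{p^m}$. I would set $\psi(\bar{x}) := \alpha \bmod p^k$ and check that $\psi(\bar{x})$ is a $k$-Henselian root lying in the right block: since $\alpha \equiv \bar{x} \pmod{p^m}$ gives $v_p(g'(\alpha)) = m - 1 < k$, the derivative condition $g'(\psi(\bar x)) \not\equiv 0 \pmod{p^k}$ holds, and $\alpha \bmod p^{2k}$ serves as the required witness modulo $p^{2k}$.

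Finally, I would verify $\phi \circ \psi = \mathrm{id}$ and $\psi \circ \phi = \mathrm{id}$. The first is immediate from reducing $\alpha \bmod p^m$. For the second, if $x$ is a $k$-Henselian root with $v_p(g'(x)) = m-1$, then Hensel at level $k$ produces a unique $\alpha_0 \in \ZZ_p$ with $g(\alpha_0) = 0$ and $\alpha_0 \equiv x \pmod{p^k}$, hence also $\alpha_0 \equiv \bar{x} \pmod{p^m}$, so uniqueness in Hensel forces $\alpha_0 = \alpha$ and thus $\psi(\phi(x)) = \alpha_0 \bmod p^k = x$. Summing $\abs{\phi^{-1}(\text{primitive }m\text{-Henselian roots})} = \htCount[m]{g}$ over $m = 1, \dots, k$ yields the identity. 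The only subtle point — not a real obstacle, but the one to handle with care — is that the statement of Hensel's lemma as used here must be applied at the appropriate level $m$ (rather than $k$), which is why I explicitly promote the witness $y \in \Rmod{p^{2m}}$ before invoking it.
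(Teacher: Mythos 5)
Your proof is correct, but it takes a genuinely different route from the paper's. The paper proceeds by induction on $k$: it establishes the recursion $\hCount{g} = \hCount[k-1]{g} + \htCount[k]{g}$ by constructing a single bijection $\theta$ from the set of $(k-1)$-Henselian roots to the set of \emph{non-primitive} $k$-Henselian roots (Hensel-lift a $(k-1)$-Henselian root to $\ZZ_p$, then reduce modulo $p^k$; the inverse is reduction modulo $p^{k-1}$), and then unrolls the recursion. You instead stratify the $k$-Henselian roots all at once by $m = v_p(g'(x)) + 1$ and match each stratum directly with the primitive $m$-Henselian roots, so your $\phi$ is reduction modulo $p^m$ rather than modulo $p^{k-1}$, and the inverse $\psi$ performs the Hensel lift and reduction modulo $p^k$ in one shot. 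The mechanics are the same — ``reduce'' in one direction, ``lift via Hensel, then reduce'' in the other — and the crucial observation underlying both is that the valuation $v_p(g'(\cdot))$ is preserved along these maps because it is strictly less than $m$ and hence visible modulo $p^m$. What the paper's approach buys is a shorter local argument (the bijection need only bridge one level, $k-1 \leftrightarrow k$, so there is no bookkeeping of the index $m$), at the cost of an induction wrapper; your approach buys a direct, non-inductive statement, at the cost of tracking the stratum index and verifying that $\psi$ lands back in the correct block, which you do correctly via the ultrametric argument $v_p(g'(\alpha)) = m-1$.
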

\begin{proof}
Clearly $\hCount[1]{g} = \htCount[1]{g}$ because $g'\pa*{x}\equiv0\pmod{p^{0}}$ for all $x\in\Rmod{p}$.
So, it suffices to show that
\begin{equation}
\hCount{g} = \hCount[k-1]{g} + \htCount[k]{g}
\label{eq:h-count-recursion}\text{,}
\end{equation}
and the rest follows by induction.

We write $\mathcal{H}_{k-1} \subseteq \Rmod{p^{k-1}}$ for the set of all $\pa*{k-1}$-Henselian roots of $g$, and $\widetilde{\mathcal{H}}_{k}\subseteq \Rmod{p^{k}}$ for the set of all non-primitive
$k$-Henselian roots of $g$. We define a map $\theta\colon\mathcal{H}_{k-1}\to\widetilde{\mathcal{H}}_k$ in the following manner.
For $x\in \mathcal{H}_{k-1}$, by Hensel's lemma (Theorem~\ref{thm:Hensel-lemma}) there exists a unique
lifting of $x$ to $\alpha\in \ZZ_p$ such that $g\pa*{\alpha} = 0$, so we put $\theta\pa*{x}=\alpha \bmod{p^k}$. The element $\theta\pa*{x}$ is a non-primitive Henselian root. Indeed, $g\pa*{\alpha} \equiv 0 \pmod{p^{2k}}$ and
$g'\pa*{\alpha} \not\equiv 0 \pmod{p^k}$ because $g'\pa*{\alpha} \equiv g'\pa*{x} \not\equiv 0 \pmod{p^{k-1}}$.

The map $\theta$ is injective because we have that
\[
  \theta\pa*{x} \equiv \alpha \equiv x \pmod{p^{k-1}}\text{,}
\]
meaning the reduction map modulo $p^{k-1}$ is the left inverse of $\theta$.

Moreover, the map $\theta$ is surjective. Indeed, let $y\in \widetilde{\mathcal{H}}_k$.
By Hensel's lemma (Theorem~\ref{thm:Hensel-lemma}) $y$ has a unique lift $\beta \in \ZZ_p$ such that $g\pa*{\beta} = 0$. Since $y$ is non-primitive, we have $g'\pa*{\beta}\equiv g'\pa*{y} \not\equiv 0 \pmod{p^{k-1}}$. Hence, $x:=\beta \bmod{p^{k-1}}=y\bmod{p^{k-1}}$ is a $\pa*{k-1}$-Henselian root of $g$. From the uniqueness of $\beta$
we have that $y=\theta\pa*{x}$.

Therefore, $\theta$ is a bijection. Hence the number of non-primitive $k$-Henselian roots of $g$ equals to $\hCount[k-1]{g}$, which proves equation \eqref{eq:h-count-recursion}.
\end{proof}

\begin{lem}
\label{lem:ex-k-henselian}Let $k>0$ and let $g\in\ZZ_{p}\br*{X}$
be a random polynomial such that $g\bmod{p^{2k}}$ is distributed
uniformly in $\Upsilon_{2k}$. Then
\[
\Ex\br*{\hCount[k]g}=\frac{1-p^{-2k+2}}{p+1}.
\]
\end{lem}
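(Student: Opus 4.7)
The plan is to combine linearity of expectation with a translation-invariance argument. Writing
\[
  \Ex\br*{\hCount[k]{g}} = \sum_{x \in \Rmod{p^k}} \Pr\pa*{x \text{ is a } k\text{-Henselian root of } g},
\]
one notes that each summand depends only on $g \bmod p^{2k}$, since the $k$-Henselian condition for $x$ refers to $g(y) \bmod p^{2k}$ for a lift $y$ of $x$ and to $g'(x) \bmod p^k$, both determined by $g \bmod p^{2k} \in \Upsilon_{2k}$. The strategy is first to show that all summands coincide, and then to compute the common value.

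For the first part, fix $a \in \Rmod{p^{2k}}$ and consider the substitution $\tau_a \bar{g}(X) := \bar{g}(X+a)$ acting on $\Rmod{p^{2k}}\br*{X}$. Expanding
\[
  \tau_a \bar{g}(X) = \sum_{i=0}^{2k-1} c_i p^i (X+a)^i = \sum_{j=0}^{2k-1} p^j X^j \sum_{i=j}^{2k-1} c_i p^{i-j} \binom{i}{j} a^{i-j}
\]
shows that every coefficient of $X^j$ in $\tau_a \bar g$ is still a multiple of $p^j$, so $\tau_a$ maps $\Upsilon_{2k}$ into itself; since $\tau_{-a}$ is its inverse, $\tau_a$ is a bijection of $\Upsilon_{2k}$ and preserves the uniform distribution. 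Because $x_0$ is a $k$-Henselian root of $g$ iff $0$ is a $k$-Henselian root of $\tau_{x_0} g$, this yields $\Pr\pa*{x_0 \text{ is } k\text{-Henselian}} = p_0$ for every $x_0 \in \Rmod{p^k}$, where $p_0 := \Pr\pa*{0 \text{ is a } k\text{-Henselian root of } g}$.

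It remains to compute $p_0$. For $g \bmod p^{2k} = \sum c_i p^i X^i \in \Upsilon_{2k}$, we have $g(0) = c_0$ and $g'(0) = c_1 p$, so the derivative condition $g'(0) \not\equiv 0 \pmod{p^k}$ becomes $v := v_p(c_1) \le k-2$. Any candidate lift $y = p^k t$ of $0$ satisfies $c_i p^i (p^k t)^i \equiv 0 \pmod{p^{2k}}$ for $i \ge 2$, so the existence of a suitable $t$ reduces to solving $c_0 + c_1 p^{k+1} t \equiv 0 \pmod{p^{2k}}$, which has a solution iff $c_0 \equiv 0 \pmod{p^{k+1+v}}$. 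Using that $c_0$ is uniform in $\Rmod{p^{2k}}$ and $c_1$ is uniform in $\Rmod{p^{2k-1}}$ (so $\Pr\br*{v_p(c_1) = v} = (p-1)/p^{v+1}$), a geometric series gives
\[
  p_0 = \sum_{v=0}^{k-2} \frac{p-1}{p^{v+1}} \cdot \frac{1}{p^{k+1+v}} = \frac{p-1}{p^{k+2}} \cdot \frac{1 - p^{-2k+2}}{1 - p^{-2}} = \frac{1 - p^{-2k+2}}{p^k(p+1)},
\]
and multiplying by $p^k$ recovers the stated formula. The only delicate point is the translation invariance of $\Upsilon_{2k}$, but it is automatic from the expansion above since every summand contributing to the coefficient of $X^j$ carries a factor of $p^i \ge p^j$; all remaining steps are direct calculations.
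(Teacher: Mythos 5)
Your proof is correct, and it takes a genuinely different route from the paper's. The paper first stratifies the Henselian roots via the notion of \emph{primitive} $m$-Henselian roots, proving the recursion $\hCount[k]{g}=\sum_{m=1}^{k}\htCount[m]{g}$ (Lemma~\ref{lem:primitive-hensel-roots-sum}), and then computes $\Ex\br*{\htCount[m]{g}}$ for each $m$ by analyzing, for every $x\in\Rmod{p^m}$, the lifts of $x$ to $\Rmod{p^{m+1}}$. You instead exploit the translation invariance of $\Upsilon_{2k}$: the maps $\tau_a\colon\bar g(X)\mapsto\bar g(X+a)$ are measure-preserving bijections of $\Upsilon_{2k}$ (your coefficient expansion justifies this since each contribution to the coefficient of $X^j$ carries a factor $p^i$ with $i\ge j$), and $x_0$ is $k$-Henselian for $g$ precisely when $0$ is $k$-Henselian for $\tau_a g$ where $a$ lifts $x_0$. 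This collapses the sum over $x$ to $\Ex\br*{\hCount[k]{g}}=p^k p_0$, and the computation of $p_0$ reduces neatly to the solvability of $c_0+c_1p^{k+1}t\equiv 0\pmod{p^{2k}}$, which you correctly resolve as $c_0\equiv 0\pmod{p^{k+1+v}}$ with $v=v_p(c_1)\le k-2$, using independence and uniformity of $c_0,c_1$. Your route bypasses the primitive-Henselian machinery entirely (which in the paper is introduced only to prove this lemma), and is arguably more elementary and self-contained; the paper's stratification is more structural, making visible how Henselian roots at level $m$ arise from those at level $m-1$, but for the statement at hand both approaches land on the same geometric series and the same closed form.
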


\begin{proof}
We start by computing $\Ex\br*{\htCount[m]{f}}$ and applying Lemma~\ref{lem:primitive-hensel-roots-sum}.
For $m=1$, since $g'\equiv 0 \pmod{p}$ by the definition of $\Upsilon_{2k}$ there are
no $1$-Henselian roots and $\Ex\br*{\htCount[1]g}=0$. For
$1<m\le k$, we write $\mathcal{H}'_{m}$ for the set of all primitive $m$-Henselian
roots of $g$, so that $\htCount[m]{g} = \#\mathcal{H}'_m$. We use the following consequence of linearity of expectation
\begin{equation}\label{eq:htm-count-technique}
\Ex\br*{\htCount[m]g}=\sum_{x\in\Rmod{p^{m}}}\Pr\pa*{x\in \mathcal{H}'_{m}}\text{.}
\end{equation}

Define $G_x$ to be the event that $g'\pa*{x}\not\equiv0\pmod{p^{m}}$
and $g'\pa*{x}\equiv0\pmod{p^{m-1}}$.
Assume $G_x$ occurs and let $y,\widetilde{y}\in \Rmod{p^{2m}}$ be two lifts of $x$.
Then
\begin{equation}\label{eq:henselian-lift-equiv}
g\pa*{y}\equiv g\pa*{\widetilde{y}}\pmod{p^{2m}}
\iff y\equiv \widetilde{y}\pmod{p^{m+1}}
\text{.}
\end{equation}
Indeed, if $y=\widetilde{y}+p^{m+1}z$ then
\begin{equation}
\label{eq:y-to-tilde-y-by-taylor}
g\pa*{y}=g\pa*{\widetilde{y}+p^{m+1}z}=g\pa*{\widetilde{y}}+g'\pa*{\widetilde{y}}p^{m+1}z+\frac{1}{2}g''\pa*{\widetilde{y}}p^{2m+2}z^{2}+\cdots\text{ .}
\end{equation}
The expressions $g^{\pa{i}}\pa*{\widetilde{y}} / i!$ are $p$-adic integers because they are the the coefficients of the polynomial $g\pa*{\widetilde{y} + X}$.
Thus, reducing equation~\eqref{eq:y-to-tilde-y-by-taylor} modulo $p^{2m}$ and using that $g'\pa*{\widetilde{y}}\equiv g'\pa*{x}\equiv0\pmod{p^{m-1}}$ gives $g\pa*{y}\equiv g\pa*{\widetilde{y}}\pmod{p^{2m}}$.
For the other direction, assume that $g\pa*{y}\equiv g\pa*{\widetilde{y}} \pmod{p^{2m}}$. Write $y=x+p^{m}z$ and $\widetilde{y}=x+p^{m}\widetilde{z}$ then
\begin{align*}
g\pa*{y}&=g\pa*{x+p^{m}z}=g\pa*{x}+g'\pa*{x}p^{m}z+\frac{1}{2}g''\pa*{x}p^{2m}z^{2}+\cdots
\text{ ,} \\
g\pa*{\widetilde{y}}&=g\pa*{x+p^{m}\widetilde{z}}=g\pa*{x}+g'\pa*{x}p^{m}\widetilde{z}+\frac{1}{2}g''\pa*{x}p^{2m}\widetilde{z}^{2}+\cdots
\text{ .}
\end{align*}
Reducing modulo $p^{2m}$, all terms in the right hand side except possibly the first two vanish.
Plugging in the assumption $g\pa*{y}\equiv g\pa*{\widetilde{y}}\pmod{p^{2m}}$,
we get that $g'\pa*{x}p^{m}z\equiv g'\pa*{x}p^{m}\widetilde{z}\pmod{p^{2m}}$.
Dividing this congruence by $p^{2m-1}$, since $g'\pa*{x}\not\equiv0\pmod{p^{m}}$
we infer that $z\equiv \widetilde{z}\pmod{p}$. Hence $y\equiv \widetilde{y}\pmod{p^{m+1}}$, as needed.

We denote by $\mathcal{L}_{x}$ the set of lifts of $x$ to $\Rmod{p^{m+1}}$.
Equation \eqref{eq:henselian-lift-equiv} means that when checking if $x$ is an $m$-Henselian
root, it suffices to check if $\mathcal{L}_x$ contains a root
of $g$ modulo $p^{2m}$.
The other direction of the equation gives us that there is at most one such root in $\mathcal{L}_x$.
Thus
\begin{equation}
\label{eq:htm-prop}
\begin{aligned}
\Pr\pa*{x\in \mathcal{H}'_{m}}
&=\Pr\pa*{\set*{\exists y\in \mathcal{L}_{x},g\pa*{y}\equiv0\pmod{p^{2m}}}\cap G_x}\\
& =\Pr\pa*{\bigcupdot_{y\in \mathcal{L}_{x}}\set*{g\pa*{y}\equiv0\pmod{p^{2m}}}\cap G_x}\\
& =\sum_{y\in \mathcal{L}_{x}}\Pr\pa*{\set*{g\pa*{y}\equiv0\pmod{p^{2m}}}\cap G_x}\text{.}
\end{aligned}
\end{equation}
We write $g\pa*{X}=\rC_{0}+\rC_{1}pX+\rC_{2}p^{2}X^{2}+\dots$\ .
Then
\begin{equation}
\label{eq:lift-prop}
\begin{aligned}
\Pr\pa*{\set*{g\pa*{y}\equiv0\pmod{p^{2m}}}\cap G_x}
&=
\Pr\pa*{\cAndA{g\pa*{y}\equiv0\pmod{p^{2m}}}{g'\pa*{y}\not\equiv0\pmod{p^{m}}}{g'\pa*{y}\equiv0\pmod{p^{m-1}}}}\\
&=\Pr\pa*{\cAndA{\rC_{0}\equiv-\pa*{\rC_{1}py+\dots}\pmod{p^{2m}}}{p\rC_{1}\not\equiv-\pa*{2\rC_{2}p^{2}y+\dots}\pmod{p^{m}}}{p\rC_{1}\equiv-\pa*{2\rC_{2}p^{2}y+\dots}\pmod{p^{m-1}}}}\\
&=\Pr\pa*{\cAndA{\rC_{0}\equiv-\pa*{\rC_{1}py+\dots}\pmod{p^{2m}}}{\rC_{1}\not\equiv-\pa*{2\rC_{2}py+\dots}\pmod{p^{m-1}}}{\rC_{1}\equiv-\pa*{2\rC_{2}py+\dots}\pmod{p^{m-2}}}}\\
&=p^{-3m}\cdot p\pa*{p-1}
\text{.}\end{aligned}
\end{equation}
The last equality holds true because the pair $\pa*{\rC_{0} \bmod p^{2m}, \rC_{1} \bmod p^{m-1}}$ is distributed
uniformly in $\Rmod{p^{2m}} \times \Rmod{p^{m-1}}$.

Finally, plugging equations \eqref{eq:htm-prop} and \eqref{eq:lift-prop} into equation \eqref{eq:htm-count-technique} gives
\[
\Ex\br*{\htCount[m]g}=\sum_{x\in\Rmod{p^{m}}}\sum_{y\in \mathcal{L}_{x}}p^{-3m}\cdot p\pa*{p-1}=p^{-2m}\cdot p^{2}\pa*{p-1}
\]
and by Lemma~\ref{lem:primitive-hensel-roots-sum} we get
\[
\Ex\br*{\hCount[k]g}=\sum_{m=2}^{k}p^{-2m}\cdot p^{2}\pa*{p-1}=\frac{1-p^{-2k+2}}{p+1}\text{.}\qedhere
\]
\end{proof}
For $x\in\Rmod{p^{k}}$ we say that $x$ is \emph{simple root
of $g$ modulo $p^{k}$} if $g\pa*{x}\equiv 0\pmod{p^{k}}$ and $g'\pa*{x}\not\equiv0\pmod{p^{k}}$.
We say $x\in\Rmod{p^{k}}$
is \emph{non-simple root of $g$ modulo $p^{k}$} if $g\pa*{x}\equiv g'\pa*{x}\equiv 0\pmod{p^{k}}$.
We denote by $M_g$ the event that $g$ has a non-simple root modulo $p^{k}$.
\begin{lem}
\label{lem:non-simple-root}For any $k>0$, let $g\in\ZZ_{p}\br*{X}$
be a random polynomial such that $g\bmod{p^{k}}$ is distributed uniformly
in $\Upsilon_{k}$. Then we have that
\[
\Pr\pa*{M_g}\le p^{-k+1}.
\]
\end{lem}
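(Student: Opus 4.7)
The plan is a straightforward union bound over $x \in \Rmod{p^k}$, exploiting the fact that in the uniform distribution on $\Upsilon_k$ the two "bottom" coefficients $c_0$ and $c_1$ are uniform on $\Rmod{p^k}$ and $\Rmod{p^{k-1}}$ respectively, and are independent of each other and of $c_2, \dots, c_{k-1}$. This will let me pin down the probability of the pair of congruences $g(x) \equiv g'(x) \equiv 0 \pmod{p^k}$ very precisely: exactly $p^{-k} \cdot p^{-(k-1)} = p^{-2k+1}$ for each $x$, regardless of the higher coefficients.

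More concretely, I write $g(X) = \sum_{i=0}^{k-1} c_i p^i X^i \pmod{p^k}$ with $c_i$ independent and uniform in $\Rmod{p^{k-i}}$. For a fixed $x\in \Rmod{p^k}$, the condition $g(x)\equiv 0\pmod{p^k}$ is equivalent to
\[
c_0 \equiv -\sum_{i=1}^{k-1} c_i p^i x^i \pmod{p^k},
\]
while $g'(x) = \sum_{i=1}^{k-1} i c_i p^i x^{i-1}$, so $g'(x) \equiv 0 \pmod{p^k}$ is equivalent to
\[
c_1 \equiv -\sum_{i=2}^{k-1} i c_i p^{i-1} x^{i-1} \pmod{p^{k-1}}
\]
(after dividing through by $p$). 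Conditioned on $c_2,\dots,c_{k-1}$, the right-hand sides are constants, and the two congruences constrain the independent uniform variables $c_0$ and $c_1$ separately. Hence the conditional probability that both hold equals $p^{-k}\cdot p^{-(k-1)}=p^{-2k+1}$, and the same holds unconditionally.

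Finally, applying the union bound over the $p^k$ possible values of $x\in\Rmod{p^k}$ yields
\[
\Pr\pa*{M_g} \le \sum_{x\in\Rmod{p^k}} \Pr\pa*{g(x)\equiv g'(x)\equiv 0 \pmod{p^k}} \le p^k\cdot p^{-2k+1}=p^{-k+1},
\]
which is the desired bound. There is no real obstacle here; the only point needing mild care is the algebraic manipulation of the $g'(x)\equiv 0$ congruence (factoring out one power of $p$ to turn the constraint on $c_1$ into a uniform-probability event modulo $p^{k-1}$), and the observation that the constraints on $c_0$ and $c_1$ are logically independent so the joint probability is the product.
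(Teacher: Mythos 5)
Your proof is correct and follows essentially the same route as the paper: fix $x$, translate $g(x)\equiv g'(x)\equiv 0\pmod{p^k}$ into congruences that pin down $c_0$ modulo $p^k$ and $c_1$ modulo $p^{k-1}$, use the uniform independent distribution of the coefficients in $\Upsilon_k$ to get exactly $p^{-2k+1}$, then apply the union bound over the $p^k$ residues. One minor wording point: the constraint on $c_0$ does involve $c_1$, so ``logically independent'' is slightly loose; the clean justification is that the pair $(c_0\bmod p^k, c_1\bmod p^{k-1})$ is uniform on $\Rmod{p^k}\times\Rmod{p^{k-1}}$ and independent of $(c_2,\dots,c_{k-1})$, which is precisely what the paper invokes.
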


\begin{proof}
Let $\mathcal{M}$ be the set of all non-simple roots of $g$ modulo $p^{k}$.
We write $g\pa*{X}=\rC_{0}+\rC_{1}pX+\rC_{2}p^{2}X^{2}+\dots$
and then for a fixed $x\in\Rmod{p^k}$ we have
\begin{equation}
\label{eq:single-non-simple-prob}
\begin{aligned}
\Pr\pa*{x\in \mathcal{M}} & =\Pr\pa*{\cAnd{g\pa*{x}\equiv0\pmod{p^{k}}}{g'\pa*{x}\equiv0\pmod{p^{k}}}}\\
 & =\Pr\pa*{\cAnd{\rC_{0}\equiv-\pa*{\rC_{1}px+\dots}\pmod{p^{k}}}{p\rC_{1}\equiv-\pa*{2\rC_{2}p^{2}x+\dots}\pmod{p^{k}}}}\\
 & =\Pr\pa*{\cAnd{\rC_{0}\equiv-\pa*{\rC_{1}px+\dots}\pmod{p^{k}}}{\rC_{1}\equiv-\pa*{2\rC_{2}px+\dots}\pmod{p^{k-1}}}}\\
 & =p^{-2k+1}\text{.}
\end{aligned}
\end{equation}
The last equality holds true because $\pa*{\rC_{0} \bmod p^{2m}, \rC_{1} \bmod p^{m-1}}$ are distributed
uniformly in $\Rmod{p^{2m}} \times \Rmod{p^{m-1}}$.

We finish the proof by using union bound and plugging equation~\eqref{eq:single-non-simple-prob} obtaining that
\begin{equation*}
  \Pr\pa*{M_g}
  = \Pr\pa*{\bigcup_{x\in\Rmod{p^k}} \set*{x\in\mathcal{M}}}
  \le \sum_{x\in\Rmod{p^k}} \Pr\pa*{x\in\mathcal{M}}= p^{-k+1}
  \text{.} \qedhere
\end{equation*}
\end{proof}

\begin{prop}
\label{prop:ex-ups-root-count}Let $\varepsilon>0$ and let $g\in\ZZ_{p}\br*{X}$ be
a random polynomial such that $\deg g\le p^{9k}$ almost surely and
$g\bmod{p^{2k}}$ is distributed uniformly in $\Upsilon_{2k}$. Then
\[
\Ex\br*{\rCount g}=\frac{1}{p+1}+O\pa*{p^{-\pa*{1-\varepsilon}k}}
\]
as $k\to\infty$.
\end{prop}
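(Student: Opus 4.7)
The plan is to decompose $\rCount g = \hCount[k] g + R(g)$ with $R(g) := \rCount g - \hCount[k] g \ge 0$ (non-negative by equation~\eqref{eq:hensel-lemma-on-hensel-root}), extract the main term $1/\pa*{p+1}$ from Lemma~\ref{lem:ex-k-henselian}, and show that the error $\Ex\br*{R(g)}$ is $O\pa*{p^{-\pa*{1-\varepsilon}k}}$.

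First I identify the support of $R(g)$. Hensel's lemma (Theorem~\ref{thm:Hensel-lemma}) provides a bijection between the $k$-Henselian roots of $g$ and the roots $\alpha \in \ZZ_p$ of $g$ for which $g'\pa*{\alpha} \not\equiv 0 \pmod{p^k}$: the forward direction lifts each Henselian root uniquely, while the inverse sends such an $\alpha$ to $\alpha \bmod p^k$ (whose lift to $\Rmod{p^{2k}}$, namely $\alpha \bmod p^{2k}$, witnesses the Henselian condition). Hence $R(g)$ counts precisely those $\alpha \in \ZZ_p$ with $g\pa*{\alpha}=0$ and $g'\pa*{\alpha} \equiv 0 \pmod{p^k}$, and the residue of any such $\alpha$ is a non-simple root of $g$ modulo $p^k$. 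Therefore $R(g) \le \rCount g \cdot \ind{M_g}$.

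Next I bound $\Ex\br*{\rCount g \cdot \ind{M_g}}$ by splitting along $\set*{g \equiv 0 \pmod{p^k}}$. The uniform distribution on $\Upsilon_{2k}$ pushes down to the uniform distribution on $\Upsilon_k$ (the independent uniform coordinates remain independent uniform after reduction), so Lemma~\ref{lem:non-simple-root} yields $\Pr\pa*{M_g} \le p^{-k+1}$. On $\set*{g \not\equiv 0 \pmod{p^k}}$ the shape of $\Upsilon_k$ forces $\deg\pa*{g \bmod p^k} \le k-1$, so Proposition~\ref{prop:roots-in-zp} gives $\rCount g \le k-1$. On the complementary event $\set*{g \equiv 0 \pmod{p^k}}$, whose probability equals $1/\#\Upsilon_k = p^{-k\pa*{k+1}/2}$, I fall back on the trivial bound $\rCount g \le \deg g \le p^{9k}$.

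Combining these estimates yields $\Ex\br*{R(g)} \le \pa*{k-1}p^{-k+1} + p^{9k - k\pa*{k+1}/2}$, which is $O\pa*{p^{-\pa*{1-\varepsilon}k}}$ for every $\varepsilon>0$ as $k \to \infty$: the polynomial factor $k-1$ is absorbed by any $p^{\varepsilon k}$, and the second term is super-exponentially smaller since the quadratic exponent $k\pa*{k+1}/2$ overwhelms $9k$ for large $k$. Together with $\Ex\br*{\hCount[k] g} = \pa*{1-p^{-2k+2}}/\pa*{p+1}$ from Lemma~\ref{lem:ex-k-henselian}, this produces the claimed asymptotic. The main obstacle is the event $\set*{g \equiv 0 \pmod{p^k}}$, where the $\Upsilon_k$ degree bound degenerates; this is precisely where the hypothesis $\deg g \le p^{9k}$ enters, and the comfortable gap between $p^{9k}$ and $p^{k\pa*{k+1}/2}$ is what keeps the exceptional contribution negligible.
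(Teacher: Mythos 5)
Your proof is correct and follows essentially the same route as the paper's: both isolate $\hCount[k]g$ as the main term via Hensel's lemma (the excess being exactly the roots $\alpha\in\ZZ_p$ whose residue mod $p^k$ is non-simple), bound that excess by $O(k)\Pr\pa*{M_g}$ on the event $g\not\equiv 0\pmod{p^k}$ using Proposition~\ref{prop:roots-in-zp} and Lemma~\ref{lem:non-simple-root}, and absorb the event $g\equiv 0\pmod{p^k}$ with the crude bound $\rCount g\le\deg g\le p^{9k}$ against the super-exponentially small probability $p^{-k(k+1)/2}$. The only cosmetic difference is that you phrase the Hensel-root count as an explicit bijection with roots satisfying $g'(\alpha)\not\equiv 0\pmod{p^k}$, which is a slightly cleaner way of stating what the paper asserts in its first two sentences.
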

\begin{proof}
By Hensel's lemma (Theorem~\ref{thm:Hensel-lemma}), we know that any $k$-Henselian root lifts uniquely
to a root in $\ZZ_{p}$. Moreover, if $x$ is a simple root of $g$
modulo $p^{k}$ then $x$ lifts to a root in $\ZZ_{p}$ if and only if
$x$ is $k$-Henselian. Indeed, if $x$ lifts to a root
$\alpha \in \ZZ_{p}$, the $\alpha\bmod{p^{2k}}$ is a lift of $x$ to a root of $g$ in $\Rmod{p^{2k}}$.

The number of roots of $g$ in $\ZZ_p$ that reduce to a non-simple root modulo $p^k$ is at most $\deg g$, in particular when $g\equiv0\pmod{p^{k}}$.
When $g\not\equiv0\pmod{p^{k}}$, Proposition~\ref{prop:roots-in-zp} bounds that number by $\deg\pa*{g \bmod p^k} < k$.

This yields the following upper bound on the expected number of roots of $g$,
\begin{equation}\label{eq:root-count-upper-estimate}
\Ex\br*{H_{k}\pa*{g}} \le\Ex\br*{C\pa*{g}}\le\Ex\br*{H_{k}\pa*{g}}+k\Pr\pa*{M_g}
+\Ex\br*{\deg g}\Pr\pa*{g\equiv0\pmod{p^{k}}}
\text{.}
\end{equation}
By Lemma~\ref{lem:ex-k-henselian} and Lemma~\ref{lem:non-simple-root}
\begin{align}
\Ex\br*{H_{k}\pa*{g}} &=\frac{1}{p+1}+O\pa*{p^{-2k}}
\label{eq:hk-estimate}\text{,}\\
k\Pr\pa*{M_g}&=kO\pa*{p^{-k+1}}=O\pa*{p^{-\pa*{1-\varepsilon}k}}
\label{eq:mk-estimate}\text{.}
\end{align}
Finally, since $\deg g\le p^{9k}$ almost surely and by equation \eqref{eq:ups-size-1}, we get that
\begin{equation}\label{eq:deg-zero-estimate}
\Ex\br*{\deg g}\Pr\pa*{g\equiv0\pmod{p^{k}}}=O\pa*{p^{9k}}p^{-k\pa*{k+1}/2}=O\pa*{p^{-k^{2}/4}}\text{.}
\end{equation}
Plugging equations \eqref{eq:hk-estimate}, \eqref{eq:mk-estimate} and \eqref{eq:deg-zero-estimate} into equation \eqref{eq:root-count-upper-estimate}
finishes the proof.
\end{proof}
\begin{proof}[Proof of Proposition~\ref{prop:ups-root-count-rephrased}]
If $k$ is even we just apply Proposition~\ref{prop:ex-ups-root-count} substituting $k$ by $k/2$.
Otherwise $k$ is odd. Since $g\bmod{p^k}$ is distributed uniformly in $\Upsilon_{k}$ then $g\bmod{p^{k-1}}$ is distributed uniformly in
$\Upsilon_{k-1}$. Moreover, $\deg g \le p^{4k} = p^{8\pa*{k-1}/2+4} \le p^{9\pa*{k-1}/2}$ almost surely for $k$ sufficiently large.
So we apply Proposition~\ref{prop:ex-ups-root-count} again substituting $k$ by $k/2$ to finish the proof.
\end{proof}

\section{Random walks on \texorpdfstring{$\pa*{\Rmod q}^{d}$}{(Z/qZ)\string^d}}
\label{sec:random-walks}

Let $q$ be a powers of the prime number $p$ and $V=\pa*{\Rmod q}^{d}$.
Let $\rC_{0},\rC_{1},\dots,\rC_{n-1}$ be i.i.d.\ random variables
taking values in $\ZZ_p$ distributed according to a law $\mu$.
We choose some vectors $\step_{0},\step_{1},\dots,\step_{n-1}$ in $V$.
For some $r\in\pa*{\Rmod q}^{\times}$
we study the random walk over the additive group $\pa*{V,+}$
whose $n$-th step is $\sum_{i=0}^{n-1}\rC_{i}r^{i}\step_{i}$. We denote
by $\nu_{r}$ the probability measure induced from the
$n$-th step.

For two vectors $\u,\w\in V$, we denote by
$\ang*{ \u,\w} $ the formal dot product, i.e.
\[
\ang*{ \u,\w} =u_{1}w_{1}+\dots+u_{d}w_{d}\text{.}
\]
For a non-zero vector $\u\in V$,
we call the number of vectors in $\step_{0},\dots,\step_{n-1}$ such that
$\ang*{ \u, \step_i} \ne0$, the \emph{$\u$-weight of $\step_{0},\dots,\step_{n-1}$}
and we denote it by $\den_{\u}\pa*{\step_{0},\dots,\step_{n-1}}$.
We define the \emph{minimal weight of $\step_{0},\dots,\step_{n-1}$}
to be
\[
\sigma\pa*{\step_{0},\dots,\step_{n-1}}=\min_{\u\in V\setminus\set*{\vecz}
}\den_{\u}\pa*{\step_{0},\dots,\step_{n-1}}\text{.}
\]

We define $\tau$ to be the number
\begin{equation}
\tau=1-\sum_{x\in\Rmod p}\mu\pa*{x+p\ZZ_{p}}^{2}
\label{eq:tau}\text{.}
\end{equation}
Note that $\tau \ge 0$ since $\mu\pa*{E}^2 \le \mu\pa*{E} \le 1$ and $\sum_{x\in\Rmod p}\mu\pa*{x+p\ZZ_{p}}=1$.

The relation between $\tau$, $\sigma$ and the measure $\nu_{r}$
is found in the following proposition, cf.\ \cite[Proposition 23]{breuillard_irreducibility_2019}.
\begin{prop}
\label{prop:zero-random-walk}
For any $r\in\pa*{\Rmod q}^{\times}$
and $\u\in V$, we have
\[
\nu_{r}\pa*{\u}=\frac{1}{\#V}+O\pa*{\exp\pa*{-\frac{\tau\sigma\pa*{\step_{0},\step_{1},\dots,\step_{n-1}}}{q^{2}}}}
\text{.}\]

\end{prop}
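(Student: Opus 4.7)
The plan is a Fourier analysis argument on the finite abelian group $V = \pa*{\Rmod q}^{d}$, in the spirit of \cite{chung_random_1987} and of Breuillard--Varj\'u \cite[Proposition~23]{breuillard_irreducibility_2019}. The characters of $V$ are indexed by $\w \in V$ via $\chi_{\w}\pa*{\u} = \exp\pa*{2\pi i \ang*{\w, \u} / q}$, and Fourier inversion reads
\[
\nu_{r}\pa*{\u} = \frac{1}{\#V} \sum_{\w \in V} \widehat{\nu_{r}}\pa*{\w} \, \overline{\chi_{\w}\pa*{\u}}.
\]
The trivial character $\w = \vecz$ contributes exactly $1/\#V$, so the task is to bound $\abs*{\widehat{\nu_{r}}\pa*{\w}}$ uniformly over $\w \ne \vecz$. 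Since $\nu_{r}$ is the distribution of the independent sum $\sum_{i=0}^{n-1} \rC_{i} r^{i} \step_{i}$, the Fourier coefficients factorise as
\[
\widehat{\nu_{r}}\pa*{\w} = \prod_{i=0}^{n-1} \Ex\br*{\exp\pa*{2\pi i \rC_{i} r^{i} \ang*{\w, \step_{i}} / q}}.
\]
Because $r$ is a unit modulo $q$, $r^{i} \ang*{\w, \step_{i}} \equiv 0 \pmod{q}$ if and only if $\ang*{\w, \step_{i}} \equiv 0 \pmod{q}$; hence for any fixed $\w \ne \vecz$ at least $\sigma\pa*{\step_{0}, \dots, \step_{n-1}}$ of the factors correspond to nonzero integers modulo $q$, while all others contribute at most $1$ in absolute value.

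The heart of the proof is a single-coordinate bound: for $m \in \Rmod q$ with $m \ne 0$, write $q = p^{k}$ and $m = p^{j} m'$ with $\gcd\pa*{m', p} = 1$ and $0 \le j < k$. Squaring and introducing an independent copy $\rC'$ of $\rC$ gives
\[
\abs*{\Ex\br*{\exp\pa*{2\pi i \rC m / q}}}^{2} = \Ex\br*{\cos\pa*{2\pi \pa*{\rC - \rC'} m / q}}.
\]
By the definition \eqref{eq:tau} of $\tau$, the event $\rC \equiv \rC' \pmod{p}$ has probability $1 - \tau$, on which we bound the cosine trivially by $1$. On its complement, $\pa*{\rC - \rC'} m'$ is nonzero modulo $p^{k-j}$ because $\gcd\pa*{m', p} = 1$, so $\pa*{\rC - \rC'} m / q$ is at distance at least $1/p^{k-j} \ge 1/q$ from $\ZZ$, whence the cosine is at most $\cos\pa*{2\pi/q} \le 1 - c/q^{2}$ for a universal $c > 0$. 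Combining these two cases yields $\abs*{\Ex\br*{\exp\pa*{2\pi i \rC m / q}}} \le \exp\pa*{-c\tau/\pa*{2q^{2}}}$, and multiplying across the $\sigma$ nontrivial factors gives $\abs*{\widehat{\nu_{r}}\pa*{\w}} \le \exp\pa*{-c\tau\sigma / \pa*{2q^{2}}}$ for every $\w \ne \vecz$.

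Feeding this bound back into Fourier inversion and applying the triangle inequality over the $\#V - 1$ nontrivial characters produces the conclusion $\nu_{r}\pa*{\u} = 1/\#V + O\pa*{\exp\pa*{-\tau\sigma/q^{2}}}$, with the absolute constants absorbed into the implied $O$. The main technical obstacle is the single-coordinate estimate above, where one must simultaneously extract the factor of $\tau$ from the mod-$p$ behaviour of $\rC$ and handle the worst-case denominator $p^{k-j}$ arising from a possibly $p$-divisible $m$, uniformly in $j$; everything else is routine Fourier bookkeeping.
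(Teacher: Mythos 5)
Your argument follows the same Fourier-analytic route as the paper: Fourier inversion over $V$, factorisation of $\widehat{\nu}_r$ over the independent steps, and a single-step bound obtained by squaring (introducing an independent copy $\rC'$) and extracting the factor $\tau$ from the mod-$p$ behaviour of $\rC-\rC'$. The paper's Lemma~\ref{lem:1-step-fourier-bound} and Lemma~\ref{lem:fourier-bound} implement precisely this.

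There is, however, a real flaw in the last line. You cannot ``absorb absolute constants into the implied $O$'' when those constants sit \emph{inside an exponent}: $\exp\pa*{-c\tau\sigma/(2q^2)}$ is $O\pa*{\exp\pa*{-\tau\sigma/q^2}}$ only if $c/2\ge 1$, and for $c/2<1$ the claimed bound would simply be false as $\sigma/q^2\to\infty$. So you must exhibit a specific $c\ge 2$ in the inequality $\cos\pa*{2\pi/q}\le 1-c/q^2$. Fortunately one does hold: the elementary estimate $\cos\theta\le 1-\tfrac{2}{\pi^2}\theta^2$ for $\abs*{\theta}\le\pi$ gives $\cos\pa*{2\pi/q}\le 1-8/q^2$, i.e.\ $c=8$ and hence $c/2=4\ge1$. (The paper sidesteps this entirely by using $\Re\pa*{\zeta^{t}}\le 1-2\lift{t}^2/q^2$ in Lemma~\ref{lem:1-step-fourier-bound}, which is calibrated to produce the exponent $\tau/q^2$ with no spare constant.) Once you make this numerical point explicit, your proof is complete and equivalent to the paper's.
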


Let $\mu_{q}$ and $\mu_{p}$ be the pushforward of $\mu$ to $\Rmod{q}$ and $\Rmod{p}$ respectively. Those measures satisfy the following
\[
\mu_{q}\pa*{x}=\mu\pa*{x+q\ZZ_{p}}
\qquad\text{and}\qquad
\mu_{p}\pa*{x}=\mu\pa*{x+p\ZZ_{p}}
\text{.}\]
We can use this notation to write $\tau=1-\sum_{x\in\Rmod p}\mu_p\pa*{x}^{2}$.

Let $\delta_{\w}$ be the Dirac measure on $V$, i.e.
\begin{equation}
  \delta_{\w}\pa*{\u} = \begin{cases}
    1, &\u = \w ,\\
    0, &\u \ne \w.
  \end{cases}
\label{eq:dirac-delta}
\end{equation}
We write $\mu.\delta_{\w}$ for the following probability measure on $V$:
\begin{equation}
\mu.\delta_{\w}\pa*{\cdot}=\sum_{x\in\Rmod q}\mu_{q}\pa*{x}\delta_{x\w}\pa*{\cdot}
\text{.}\label{eq:small-conv}
\end{equation}
With this notation, we can write:
\begin{equation}
\nu_{r}=\mu.\delta_{\step_{0}}\ast\mu.\delta_{\step_{1}r}\ast\dots\ast\mu.\delta_{\step_{n-1}r^{n-1}}
\label{eq:random-walk-conv}\text{.}
\end{equation}
where $*$ is the convolution operator.

In this section we denote the Fourier transform by $\widehat{\cdot}$
and we let $\zeta$ be a primitive $q$-th root of unity.
So for any function $f\colon V \to \CC$ we have the following relations
\begin{align}
\hat{f}\pa*{\u}&=\sum_{\w\in V}f\pa*{\w}\zeta^{-\ang*{ \u,\w} }\quad\text{and}\label{eq:fourier-trasform} \\ f\pa*{\u}&=\frac{1}{\#V}\sum_{\w\in V}\hat{f}\pa*{\w}\zeta^{\ang*{ \u,\w} }\text{.}\label{eq:inv-fourier-trasform}
\end{align}

The following lemma and its proof are based on \cite[lemma 31]{breuillard_irreducibility_2019}.
\begin{lem}
\label{lem:1-step-fourier-bound}Let $\u,\w\in V$. If $\ang*{ \u,\w} \ne0$
then
\[
\abs*{\widehat{\mu.\delta_{\w}}\pa*{\u}}\le\exp\pa*{-\frac{\tau}{q^2}}.
\]
\end{lem}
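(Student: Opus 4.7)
The plan is to compute $\widehat{\mu.\delta_{\w}}\pa*{\u}$ explicitly and then bound its squared modulus via a second-moment expansion. Unwinding the definition of $\mu.\delta_{\w}$ in~\eqref{eq:small-conv} and of the Fourier transform in~\eqref{eq:fourier-trasform}, and writing $a := \ang*{\u,\w}$ (which is nonzero in $\Rmod{q}$ by hypothesis), I obtain
$$\widehat{\mu.\delta_{\w}}\pa*{\u} = \sum_{x\in \Rmod{q}} \mu_q\pa*{x}\,\zeta^{-x a}.$$
Squaring, regrouping by pairs $(x,y)$, and using that the left side is real,
$$\abs*{\widehat{\mu.\delta_{\w}}\pa*{\u}}^{2} = 1 - \sum_{x,y\in \Rmod{q}} \mu_q\pa*{x}\mu_q\pa*{y}\pa*{1 - \cos\pa*{2\pi(y-x)a/q}}.$$

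Since every summand on the right is non-negative, I may restrict the sum to pairs with $y \not\equiv x \pmod p$. By the definition~\eqref{eq:tau} of $\tau$, the total mass of these pairs is $1 - \sum_{r \in \Rmod{p}} \mu_p\pa*{r}^{2} = \tau$. For such a pair, $y - x$ is a unit in $\Rmod{q}$, so $(y-x)a$ is nonzero in $\Rmod{q}$; consequently the angle $\theta := 2\pi(y-x)a/q$ is not congruent to $0$ modulo $2\pi$, and its distance from every integer multiple of $2\pi$ is at least $2\pi/q$. This yields
$$1 - \cos\theta \ge 1 - \cos\pa*{2\pi/q} = 2\sin^{2}\pa*{\pi/q} \ge 8/q^{2},$$
where the last step uses $\sin\pa*{\pi/q} \ge 2/q$, valid for $q \ge 2$.

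Combining the previous two estimates,
$$\abs*{\widehat{\mu.\delta_{\w}}\pa*{\u}}^{2} \le 1 - 8\tau/q^{2} \le \exp\pa*{-2\tau/q^{2}},$$
using the elementary inequality $1 - t \le e^{-t}$. Taking square roots finishes the proof, with ample slack in the exponent. I do not anticipate a substantive obstacle here; the only delicate step is the angle bound $\abs*{\theta} \ge 2\pi/q$, and what makes it work (even when $a$ has high $p$-adic valuation) is precisely the fact that $y - x$ is a unit in $\Rmod{q}$ whenever $y \not\equiv x \pmod p$, so multiplication by $y-x$ cannot send the nonzero element $a$ to zero in $\Rmod{q}$.
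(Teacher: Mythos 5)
Your proof is correct and follows essentially the same route as the paper's: both compute the squared modulus of the Fourier coefficient, use realness to pass to a cosine sum, restrict to pairs with $x\not\equiv y\pmod{p}$ so that $(x-y)\ang*{\u,\w}$ is a nonzero residue in $\Rmod{q}$, and then extract the factor $\tau/q^2$ by bounding the cosine away from $1$. The only minor difference is the elementary cosine estimate: the paper uses $\Re\pa*{\zeta^{t}}\le 1-2\lift{t}^{2}/q^{2}$ with a centered integer lift, whereas you use $1-\cos\pa*{2\pi m/q}\ge 2\sin^{2}\pa*{\pi/q}\ge 8/q^{2}$; both are valid and yield the stated bound.
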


\begin{proof}
By direct computation using equations \eqref{eq:fourier-trasform} and \eqref{eq:small-conv} we get
\begin{align*}
\abs*{\widehat{\mu.\delta_{\w}}\pa*{\u}}^{2} & =\sum_{\x,\y\in V}\mu.\delta_{\w}\pa*{\x}\mu.\delta_{\w}\pa*{\y}\zeta^{\ang*{ \x-\y,\u} }\\
 & =\sum_{\x,\y\in V}\sum_{x,y\in\Rmod q}\mu_{q}\pa*{x}\delta_{x\w}\pa*{\x}\mu_{q}\pa*{y}\delta_{y\w}\pa*{\y}\zeta^{\ang*{ \x-\y,\u} }\text{.}
\end{align*}
Then from equation \eqref{eq:dirac-delta}
\[
  \abs*{\widehat{\mu.\delta_{\w}}\pa*{\u}}^{2} = \sum_{x,y\in\Rmod q}\mu_{q}\pa*{x}\mu_{q}\pa*{y}\zeta^{\pa*{x-y}\ang*{ \w,\u} }\text{.}
\]
We denote by $\lift t$ the lift of $t \in \Rmod q$
to the interval $\left(-\frac{q}{2},\frac{q}{2}\right]\cap\ZZ$.
Since $\abs*{\widehat{\mu.\delta_{\w}}\pa*{\u}}^{2}\in\RR$ and
$\Re\pa*{\zeta^{t}}\le1-2\lift{t}^{2}/q^{2}$, we get
\begin{align*}
\abs*{\widehat{\mu.\delta_{\w}}\pa*{\u}}^{2} & =\sum_{x,y\in\Rmod q}\mu_{q}\pa*{x}\mu_{q}\pa*{y}\Re\pa*{\zeta^{\pa*{x-y}\ang*{\w,\u} }}\\
 & \le\sum_{x,y\in\Rmod q}\mu_{q}\pa*{x}\mu_{q}\pa*{y}\pa*{1-\frac{2\lift{\pa*{x-y}\ang*{\w,\u}}^{2}}{q^{2}}}\\
 & =1-\frac{2}{q^2}\sum_{x,y\in\Rmod q}\mu_{q}\pa*{x}\mu_{q}\pa*{y}\lift{\pa*{x-y}\ang*{ \w,\u}}^{2}
 \text{.}
\end{align*}
If $p\nmid x-y$ then $\pa*{x-y}\ang*{ \w,\u}$ is non-zero. So
\begin{equation*}
\lift{\pa*{x-y}\ang*{ \w,\u}}^2 \ge 1
\text{,}
\end{equation*}
hence
\begin{equation}
\abs*{\widehat{\mu.\delta_{\w}}\pa*{\u}}^{2}
\le1-\frac{2}{q^{2}}\sum_{\substack{x,y\in\Rmod q\\
p\nmid x-y
}
}\mu_{q}\pa*{x}\mu_{q}\pa*{y}
\label{eq:1-step-fourier-bound-1}\text{.}
\end{equation}

Since $\mu_p$ is also the pushforward measure of $\mu_q$, we have
\[
\mu_p \pa*{x'} = \sum_{\substack{x\in\Rmod{q} \\ x'\equiv x\pmod{p}}} \mu_q\pa*{x}
\text{.}\]
Hence
\[
\sum_{\substack{x,y\in\Rmod q\\p\nmid x-y}
}\mu_{q}\pa*{x}\mu_{q}\pa*{y} =\sum_{\substack{x',y'\in\Rmod p\\
x'\ne y'
}
}\mu_p\pa*{x'}\mu_p\pa*{y'}
\text{.}\]
By direct computation
\begin{align*}
\sum_{\substack{x,y\in\Rmod q\\
p\nmid x-y
}
}\mu_{q}\pa*{x}\mu_{q}\pa*{y} & =\sum_{x',y'\in\Rmod p}\mu_p\pa*{x'}\mu_p\pa*{y'}-\sum_{x'\in\Rmod p}\mu_p\pa*{x'}^{2}\\
 & =\pa*{\sum_{x'\in\Rmod p}\mu_p\pa*{x'}}^2-\sum_{x'\in\Rmod p}\mu_p\pa*{x'}^{2}\\
  & =1-\sum_{x'\in\Rmod p}\mu_p\pa*{x'}^{2}=\tau
\text{.}
\end{align*}
Plugging this into equation \eqref{eq:1-step-fourier-bound-1} and using the inequality $1-t\le\exp\pa*{-t}$, we get
\begin{align*}
\abs*{\widehat{\mu.\delta_{\w}}\pa*{\u}}^2
  \le 1-\frac{2\tau}{q^{2}} \le \exp\pa*{-\frac{2\tau}{q^{2}}}
\text{.}
\end{align*}
We finish the proof by taking square root on both sides of the inequality.
\end{proof}
\begin{lem}
\label{lem:fourier-bound} Let $r\in\pa*{\Rmod q}^{\times}$
and $\u\in V\setminus\set*{ \vecz} $. Then
\[
\abs*{\hat{\nu}_{r}\pa*{\u}}\le\exp\pa*{-\frac{\tau\den_{\u}\pa*{\step_{0},\step_{1},\dots,\step_{n-1}}}{q^{2}}}
\text{.}
\]
\end{lem}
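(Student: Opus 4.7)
The plan is to use the standard fact that the Fourier transform converts convolution into pointwise multiplication, and then combine this with the single-step bound from Lemma~\ref{lem:1-step-fourier-bound}. By equation~\eqref{eq:random-walk-conv}, the measure $\nu_r$ is the convolution
\[
\nu_r = \mu.\delta_{\step_0} \ast \mu.\delta_{\step_1 r} \ast \cdots \ast \mu.\delta_{\step_{n-1}r^{n-1}},
\]
so taking Fourier transforms yields
\[
\hat{\nu}_r(\u) = \prod_{i=0}^{n-1} \widehat{\mu.\delta_{\step_i r^i}}(\u).
\]

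Next I would analyze each factor. Since each $\mu.\delta_{\step_i r^i}$ is a probability measure on $V$, its Fourier transform is bounded by $1$ in absolute value, so factors with $\ang{\u, \step_i r^i} = 0$ contribute at most $1$ and can be safely ignored. For the remaining factors, Lemma~\ref{lem:1-step-fourier-bound} gives the bound $\exp(-\tau / q^2)$ whenever $\ang{\u, \step_i r^i} \ne 0$.

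The key observation is that, because $r \in (\Rmod q)^{\times}$ is a unit, multiplication by $r^i$ is a bijection on $\Rmod q$. Therefore
\[
\ang{\u, \step_i r^i} = r^i \ang{\u, \step_i} \ne 0 \iff \ang{\u, \step_i} \ne 0,
\]
so the number of indices $i$ for which Lemma~\ref{lem:1-step-fourier-bound} supplies a non-trivial bound is exactly $\den_{\u}(\step_0, \dots, \step_{n-1})$.

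Multiplying all the factors together then gives
\[
\abs*{\hat{\nu}_r(\u)} \le \prod_{i : \ang{\u, \step_i} \ne 0} \exp\pa*{-\tfrac{\tau}{q^2}} = \exp\pa*{-\tfrac{\tau \den_{\u}(\step_0, \dots, \step_{n-1})}{q^2}},
\]
which is the claimed inequality. There is no serious obstacle here; the only subtlety to keep in mind is the use of invertibility of $r$ to equate the $\u$-weight of the sequence $\pa*{\step_i r^i}_i$ with that of $\pa*{\step_i}_i$, so that the exponent on the right-hand side is independent of $r$.
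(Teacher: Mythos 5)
Your proof is correct and takes essentially the same route as the paper: convert the convolution to a product via the Fourier transform, apply Lemma~\ref{lem:1-step-fourier-bound} to the factors with $\ang*{\u,\step_i}\ne 0$, and bound the rest trivially by $1$. In fact your version is slightly more careful, since you spell out that the invertibility of $r$ is what makes $\ang*{\u,\step_i r^i}\ne 0$ equivalent to $\ang*{\u,\step_i}\ne 0$, a step the paper's proof uses silently when it passes from $I(\u)$ to the hypothesis of Lemma~\ref{lem:1-step-fourier-bound}.
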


\begin{proof}
We define the following set $I\pa*{\u}=\set*{ 0\le i<n:\ang*{ \u,\step_{i}} \ne0} $, so that $$
\den_{\u}\pa*{\step_{0},\step_{1},\dots,\step_{n-1}}=\#I\pa*{\u}
$$
by definition.
For $i\in I\pa*{\u}$, Lemma~\ref{lem:1-step-fourier-bound}
infers that
\begin{equation}
\abs*{\widehat{\mu.\delta_{\step_{i}r^{i}}}\pa*{\u}}\le\exp\pa*{-\frac{\tau}{q^{2}}}
\label{eq:bound-step-in-set}\text{.}
\end{equation}

Otherwise, for $i\notin I\pa*{\u}$ we have that
\begin{equation}
\abs*{\widehat{\mu.\delta_{\step_{i}r^{i}}}\pa*{\u}}\le\sum_{\w\in V}\abs*{\mu.\delta_{\step_{i}r^{i}}\pa*{\w}\zeta^{-\ang*{ \u,\w} }}=1
\label{eq:bound-step-not-in-set}\text{.}
\end{equation}
By equations \eqref{eq:random-walk-conv}, \eqref{eq:bound-step-in-set}, \eqref{eq:bound-step-not-in-set} and since the Fourier transform maps convolutions to products we get
\begin{align*}
\abs*{\hat{\nu}_{r}\pa*{\u}} & =\abs*{\prod_{i=0}^{n-1}\widehat{\mu.\delta_{\step_{i}r^{i}}}\pa*{\u}}\\
 & \le\abs*{\prod_{i\in I\pa*{\u}}\exp\pa*{-\frac{\tau}{q^{2}}}}\cdot\abs*{\prod_{i\notin I\pa*{\u}}1}\\
 & =\exp\pa*{-\frac{\tau\#I\pa*{\u}}{q^{2}}} \\
 & =\exp\pa*{-\frac{\tau\den_{\u}\pa*{\step_{0},\step_{1},\dots,\step_{n-1}}}{q^{2}}}
\text{.}\qedhere
\end{align*}
\end{proof}
\begin{proof}[Proof of Proposition~\ref{prop:zero-random-walk}]
For any probability measure $\nu$ on $V$, we have by equation \eqref{eq:fourier-trasform} that
\[
\hat{\nu}\pa*{\vecz} = \sum_{\w \in V} \nu\pa*{\w} = 1\text{.}
\]
Hence, by equation \eqref{eq:inv-fourier-trasform}
\begin{equation}
\nu\pa*{\vecz}=\frac{1}{\#V}\sum_{\w\in V}\hat{\nu}\pa*{\w}\zeta^{\ang*{ \vecz,\w} }=\frac{1}{\#V}+\frac{1}{\#V}\sum_{\w\in V\setminus\set*{ \vecz} }\hat{\nu}\pa*{\w}
\label{eq:prop-of-zero}\text{.}
\end{equation}

Since we have that  $\nu_{r}\pa*{\u}=\pa*{\delta_{-\u}\ast\nu_{r}}\pa*{\vecz}$ and $\hat{\delta}_{-\u}\pa*{\cdot} = \zeta^{\ang*{\cdot, \u}}$, by plugging it into equation \eqref{eq:prop-of-zero} we get
\[
\nu_{r}\pa*{\u}
  =\pa*{\delta_{-\u}\ast\nu_{r}}\pa*{\vecz}
  =\frac{1}{\#V}+\frac{1}{\#V}\sum_{\w\in V\setminus\set*{ \vecz} }\hat{\nu}_{r}\pa*{\w}\zeta^{\ang*{\w, \u}}
  \text{.}
\]
Therefore, by the triangle inequality and Lemma~\ref{lem:fourier-bound}
\begin{align*}
\abs*{\nu_{r}\pa*{\u}-\frac{1}{\#V}} & \le\frac{1}{\#V}\sum_{\w\in V\setminus\set*{ \vecz} }\abs*{\hat{\nu}_{r}\pa*{\w}}\\
 & \le\frac{1}{\#V}\sum_{\w\in V\setminus\set*{ \vecz} }\exp\pa*{-\frac{\tau\sigma\pa*{\step_{0},\step_{1},\dots,\step_{n-1}}}{q^{2}}}\\
 & <\exp\pa*{-\frac{\tau\sigma\pa*{\step_{0},\step_{1},\dots,\step_{n-1}}}{q^{2}}}\text{.}\qedhere
\end{align*}
\end{proof}
\section{The distribution of \texorpdfstring{${f^{\pa*{i}}\pa*{r}/i!}$}{f\string^(i)(r)/i!} modulo powers of \texorpdfstring{$p$}{p}}
\label{sec:fr-distribution}

In this section we use Proposition~\ref{prop:zero-random-walk} to
find the distribution of the Taylor coefficients of $f$, ${f^{\pa*{i}}\pa*{r}/i!}$ modulo a power of $p$.
Note it is possible to talk about $f^{\pa{i}}\pa*{r}/i!$ modulo a power of $p$ since those terms are the coefficients of the polynomial $f\pa*{r+X}$ hence $p$-adic integers.

\begin{prop}
\label{prop:coeff-prob}Let $f$ be random polynomial defined as in
Theorem~\ref{thm:main} and let $d<n$ be a positive integer.
Also, let $m_{0},\dots,m_{d-1}$ be non-negative integers and $\gamma_{0},\dots,\gamma_{d-1}\in\ZZ_{p}$ be $p$-adic integers.
There exists $\tau > 0$ depending only on the distribution of $\rC_i$, such that for any integer $1\le r<p$,
\begin{multline*}
\Pr\pa*{\forall i\in\set*{ 0,\dots,d-1} ,\frac{1}{i!}f^{\pa*{i}}\pa*{r}\equiv\gamma_{i}\pmod{p^{m_{i}}}}\\
=p^{-N}+O\pa*{\exp\pa*{-\frac{\tau n}{p^{2M+1}d}+dM\cdot\log p}}
\end{multline*}
where $M=\max_{0\le i<d}m_{i}$ and $N=\sum_{i=0}^{d-1}m_{i}$.
\end{prop}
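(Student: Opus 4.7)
The plan is to identify the vector of Taylor coefficients $\pa*{\frac{1}{i!}f^{(i)}(r)}_{i=0}^{d-1}$ modulo appropriate powers of $p$ as the terminal position of a random walk on $V = \pa*{\Rmod{p^M}}^d$, and then to quote Proposition~\ref{prop:zero-random-walk}. Setting $q = p^M$, so that $r$ is a unit in $\Rmod q$ (since $1 \le r < p$), I would define the step vectors
\[
\step_j = \pa*{\binom{j}{0},\ \binom{j}{1}r^{-1},\ \binom{j}{2}r^{-2},\ \dots,\ \binom{j}{d-1}r^{-(d-1)}}\in V
\]
for $0 \le j < n$. Putting $\rC_n = 1$ and expanding $f(r+X) = \sum_i \frac{f^{(i)}(r)}{i!} X^i$ in two ways, one checks that the $i$-th coordinate of $\sum_{j=0}^{n-1} \rC_j r^j \step_j$ equals $\frac{1}{i!}f^{(i)}(r) - \binom{n}{i}r^{n-i}$ in $\Rmod{p^M}$. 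Consequently, after an affine translation by the deterministic vector $\pa*{\gamma_i - \binom{n}{i}r^{n-i}}_i$, the event of the proposition becomes the event that this random walk lands in a box of cardinality $\prod_i p^{M-m_i} = p^{dM-N}$.

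The heart of the proof is a lower bound $\sigma(\step_0,\dots,\step_{n-1}) \ge \lfloor n/d \rfloor$ on the minimal weight. Given a non-zero $\u \in V$, one has $\ang*{\u,\step_j} = P(j)$ where $P(T) = \sum_{i=0}^{d-1} u_i r^{-i} \binom{T}{i}$. Factoring out the largest power of $p$ dividing every coefficient, write $P = p^s Q$ with $0 \le s < M$, so that at least one binomial-basis coefficient $w_i$ of $Q$ is coprime to $p$, and $P(j) \not\equiv 0 \pmod{p^M}$ whenever $Q(j) \not\equiv 0 \pmod p$. I would then argue by finite differences: if $Q(a) \equiv \dots \equiv Q(a+d-1) \equiv 0 \pmod p$ for some $a$, iterated application of the identity $\Delta\binom{T}{i} = \binom{T}{i-1}$, together with the fact that $\Delta^{d-1}Q$ equals the constant $w_{d-1}$, would force all $w_i \equiv 0 \pmod p$ by descending induction, a contradiction. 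Partitioning $\{0,\dots,n-1\}$ into $\lfloor n/d \rfloor$ consecutive windows of length $d$ would then yield the weight bound.

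With the minimal-weight bound in hand, Proposition~\ref{prop:zero-random-walk} gives $\nu_r(\u) = 1/\#V + O\pa*{\exp\pa*{-\tau\sigma/q^2}}$ uniformly in $\u$, where $\tau > 0$ because $\rC_i \bmod p$ is non-constant. Summing over the $p^{dM-N}$ vectors $\u$ compatible with the event would produce
\[
\Pr\pa*{\forall i,\ \tfrac{1}{i!}f^{(i)}(r) \equiv \gamma_i \pmod{p^{m_i}}} = p^{-N} + O\pa*{p^{dM-N}\exp\pa*{-\tau\lfloor n/d\rfloor/p^{2M}}}.
\]
Bounding $p^{dM-N} \le \exp(dM\log p)$ and using $\lfloor n/d\rfloor/p^{2M} \ge n/(d\,p^{2M+1})$ (valid once $n \ge 2d$; smaller $n$ is absorbed into the implied constant since the error is then $\Omega(1)$) would give exactly the stated error term.

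I expect the main obstacle to be the minimal-weight bound: the essential feature to exploit is that the binomial-basis finite-difference argument works uniformly in $d$, whereas the naive bound "a polynomial of degree $d-1$ has at most $d-1$ roots in $\FF_p$" degenerates once $d > p$ and would not suffice.
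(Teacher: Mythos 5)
Your proof is correct and follows the paper's high-level strategy: rewrite the Taylor coefficients $\tfrac{1}{i!}f^{(i)}(r)$ as coordinates of a random walk on $V=\pa*{\Rmod{p^M}}^d$ and invoke Proposition~\ref{prop:zero-random-walk}. The one genuinely different ingredient is the minimal-weight lower bound. The paper's Lemma~\ref{lem:minimal-den-mod-p} uses the first $d$ Pascal vectors as a unipotent basis of $\pa*{\Rmod p}^d$ together with the Lucas-theorem periodicity $\step_i \equiv \step_{i+p^\ell} \pmod p$ (with $p^{\ell-1}<d\le p^\ell$), arriving at $\sigma \ge n/(pd)-1$; it then transfers from modulus $p$ to modulus $p^k$ via Corollary~\ref{cor:minimal-den-mod-q}. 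You instead factor out $p$-divisibility from $P(T)=\sum_i u_i r^{-i}\binom{T}{i}$ and argue by finite differences in the binomial basis that a unit-coefficient $Q$ cannot vanish mod $p$ at $d$ consecutive integers, yielding $\sigma\ge\lfloor n/d\rfloor$ directly at modulus $p^M$. Your finite-difference argument is correct (the identity $\Delta^k Q(a)=\sum_{i\ge k}w_i\binom{a}{i-k}$ plus descending induction on $k$ is exactly right), it is self-contained and avoids the Lucas-theorem periodicity step, and it gives a slightly sharper constant. Your closing remark is also a genuine insight: the naive claim that ``$Q$ has at most $d-1$ zeros mod $p$'' is false once $d>p$, because $\binom{T}{i}$ reduced mod $p$ is not a polynomial function of $T\bmod p$; this is precisely why both you and the paper must work harder, and why the finite-difference argument is a clean way out.

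One small caveat you should state explicitly when writing this up: the factorization $P=p^sQ$ and the statement ``at least one $w_i$ is coprime to $p$'' implicitly require lifting the coordinates $u_i\in\Rmod{p^M}$ to $\ZZ_p$ compatibly (choosing the lift of $0$ to be $0$), taking $s=\min_i v_p(u_i)<M$; with that spelled out, the implication ``$Q(j)\not\equiv0\pmod p \Rightarrow P(j)\not\equiv0\pmod{p^M}$'' is airtight since $v_p\pa*{P(j)}=s<M$.
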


We shall need the following three auxiliary results before going to the proof.
The first lemma is a consequence of Lucas's theorem (see \cite{fine_binomial_1947}).
\begin{lem}
\label{lem:binom-cycle}
Let $p$ be a prime and $\ell$ be a positive integer.
Then for any non-negative integers $m,n$ such that $m<p^{\ell}$ we have
\[
\binom{n+p^{\ell}}{m}\equiv\binom{n}{m}\pmod{p}.
\]
\end{lem}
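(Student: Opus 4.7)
The plan is to derive this directly from Lucas's theorem, which states that for any non-negative integers $a$ and $b$ with base-$p$ expansions $a = \sum_{i \ge 0} a_i p^i$ and $b = \sum_{i \ge 0} b_i p^i$ (with $0 \le a_i, b_i < p$), one has
\[
\binom{a}{b} \equiv \prod_{i \ge 0} \binom{a_i}{b_i} \pmod{p},
\]
with the convention $\binom{a_i}{b_i} = 0$ whenever $b_i > a_i$.

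First I would observe that since $m < p^{\ell}$, the base-$p$ expansion of $m$ has $m_i = 0$ for all $i \ge \ell$. Next, I would argue that the base-$p$ digits $n_0, n_1, \dots, n_{\ell-1}$ of $n$ coincide with the corresponding digits of $n + p^{\ell}$. This is clear because $p^{\ell}$ contributes $0$ to each of the positions $0, 1, \dots, \ell - 1$, and any carrying induced by adding $1$ to the digit at position $\ell$ only propagates upward to positions $\ge \ell$ and does not touch the lower digits.

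Combining these two facts and applying Lucas's theorem twice, the factors for $i \ge \ell$ in the product for $\binom{n+p^{\ell}}{m}$ are all equal to $\binom{(n+p^{\ell})_i}{0} = 1$, while the factors for $i < \ell$ are $\binom{(n+p^{\ell})_i}{m_i} = \binom{n_i}{m_i}$. Hence
\[
\binom{n + p^{\ell}}{m} \equiv \prod_{i=0}^{\ell-1} \binom{n_i}{m_i} \equiv \binom{n}{m} \pmod{p},
\]
which is the desired identity. The argument is entirely combinatorial and there is no serious obstacle; the only point requiring a moment of care is noting that potential carries when computing $n + p^{\ell}$ are confined to positions $\ge \ell$ and therefore do not interact with the digits of $m$.
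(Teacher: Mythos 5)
Your proof is correct, and it implements the derivation the paper's prose hints at ("a consequence of Lucas's theorem") but takes a different route from the proof the paper actually sketches. The paper's argument is a generating-function computation: using the Frobenius identity $(a+b)^p\equiv a^p+b^p\pmod p$ iterated $\ell$ times, one gets $(X+1)^{p^\ell}\equiv X^{p^\ell}+1\pmod p$, hence
\[
(X+1)^{n+p^\ell}\equiv\bigl(X^{p^\ell}+1\bigr)\sum_{i=0}^n\binom{n}{i}X^i\pmod p,
\]
and comparing the coefficient of $X^m$ (using $m<p^\ell$, so the $X^{p^\ell}$ factor cannot contribute) gives the claim directly. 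Your version instead compares base-$p$ digits and applies Lucas's theorem twice, noting correctly that adding $p^\ell$ only perturbs digits in positions $\ge\ell$ while $m$ has no digits there. Both arguments are sound and short. The paper's approach buys self-containedness — it is essentially an in-line proof of the one special case of Lucas that is needed, with no external citation required — whereas yours is a clean black-box reduction to Lucas's theorem, which is arguably more transparent to a reader who already knows that result. One small point of care in your write-up, which you did handle correctly: the convention $\binom{a_i}{b_i}=0$ when $b_i>a_i$ is what makes the digit-wise product well-behaved in the degenerate cases.
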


Consider the vectors in $\step_{0},\dots,\step_n\in\pa*{\Rmod{p^k}}^{d}$
such that
\begin{equation}\label{eq:pascal-vector}
\step_{i}=\pa*{\binom{i}{0}\bmod p^k,\dots,\binom{i}{d-1}\bmod p^k}\text{.}
\end{equation}
where we define $\binom{i}{j}=0$ for $i<j$. We call those vectors
the \emph{Pascal vectors of length $d$ modulo $p^k$}. We are interested
in finding a lower bound for the minimal weight of the Pascal vectors
of length $d$ modulo $p^k$.

\begin{lem}
\label{lem:minimal-den-mod-p}Let $n\ge d\ge1$ be integers, let $p$ be a
prime and let $\step_{0},\dots,\step_{n-1}$ be the Pascal vectors of
length $d$ modulo $p$. Then we have that
\[
\frac{n}{pd} - 1\le\sigma\pa*{\step_{0},\dots,\step_{n-1}}.
\]
\end{lem}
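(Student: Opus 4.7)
The plan is to fix an arbitrary non-zero $\u \in V = \pa*{\Rmod{p}}^{d}$ and show directly that $\den_\u\pa*{\step_0,\dots,\step_{n-1}} \ge n/(pd) - 1$, then minimize over $\u$. Lifting the coordinates of $\u$ to integer representatives $u_0,\dots,u_{d-1} \in \set*{0,\dots,p-1}$ and setting $P(X) := \sum_{j=0}^{d-1} u_j \binom{X}{j}$, we get an integer-valued polynomial on integers with $\ang*{\u,\step_i} \equiv P(i) \pmod{p}$. Hence the $\u$-weight counts those $i \in \set*{0,\dots,n-1}$ with $P(i) \not\equiv 0 \pmod{p}$. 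I would split the argument into a non-vanishing step and a periodicity step.

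For non-vanishing, I invoke the forward difference operator $\Delta$, which by Pascal's identity satisfies $\Delta \binom{X}{j} = \binom{X}{j-1}$. Iterating gives $\Delta^k P(X) = \sum_{j=k}^{d-1} u_j \binom{X}{j-k}$, so $\Delta^{d-1} P$ is the constant $u_{d-1}$, and more generally $\Delta^k P(i_0) = u_k + \sum_{j>k} u_j \binom{i_0}{j-k}$. Since $\Delta^k P(i_0)$ is also an integer combination of $P(i_0),\dots,P(i_0+k)$, if $P$ were to vanish modulo $p$ on any $d$ consecutive integers $i_0,\dots,i_0+d-1$, then $\Delta^k P(i_0) \equiv 0 \pmod p$ for $k = 0,\dots,d-1$. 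A downward induction on $k$ then forces $u_k \equiv 0 \pmod p$ for every $k$, contradicting $\u \ne \vecz$. Thus $P \bmod p$ takes a non-zero value somewhere in every window of $d$ consecutive integers.

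For periodicity, let $\ell$ be minimal with $p^\ell \ge d$; by minimality $p^{\ell-1} < d$, so $p^\ell < pd$. Applying Lemma~\ref{lem:binom-cycle} to each $j < d \le p^\ell$ gives $\binom{i+p^\ell}{j} \equiv \binom{i}{j} \pmod p$, whence $P(i+p^\ell) \equiv P(i) \pmod p$ for every $i$. Combining with the non-vanishing step, every length-$p^\ell$ block of consecutive integers contains at least one $i$ with $P(i) \not\equiv 0 \pmod p$, and partitioning $\set*{0,\dots,n-1}$ into such blocks yields
\[
\den_{\u}\pa*{\step_0,\dots,\step_{n-1}} \ge \lfloor n/p^\ell \rfloor \ge n/p^\ell - 1 \ge n/(pd) - 1.
\]
Taking the infimum over non-zero $\u$ gives the claimed lower bound on $\sigma$. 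The only point that needs care is the identification of the residue classes $u_j \in \Rmod{p}$ with integer lifts so that the finite-difference identities are valid modulo $p$; beyond that, the argument is essentially bookkeeping.
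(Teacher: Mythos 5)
Your proof is correct, and it takes a modestly but genuinely different route from the paper's. The paper's argument observes that the first $d$ Pascal vectors are the rows of a lower unitriangular matrix mod $p$, hence a basis of $\pa*{\Rmod{p}}^{d}$; this yields a single index $i_0 < d$ with $\ang*{\u,\step_{i_0}}\ne 0$, and Lucas's periodicity (Lemma~\ref{lem:binom-cycle}) then propagates this non-vanishing along the arithmetic progression $i_0 + m p^{\ell}$ to count at least $\floor*{\pa*{n-i_0-1}/p^{\ell}}+1$ good indices. Your argument replaces the unitriangularity observation with a finite-difference one: since $\Delta^{k}P(i_0)$ is an integer combination of $P(i_0),\dots,P(i_0+k)$ and $\Delta^{d-1}P = u_{d-1}$, downward induction recovers every $u_k \bmod p$ from $d$ consecutive values, so $P$ cannot vanish mod $p$ on any window of $d$ consecutive integers. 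That is a strictly stronger, window-agnostic non-vanishing statement than the paper's (which is effectively the $i_0=0$ window), and it is what lets you count by simple block partitioning rather than by tracking a residue class.

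One remark worth making: given your non-vanishing step, the periodicity step is actually redundant. Partitioning $\set*{0,\dots,n-1}$ into blocks of $d$ consecutive integers already gives $\den_{\u}\ge\floor*{n/d}\ge n/d - 1 \ge n/\pa*{pd}-1$ directly, with no appeal to $p^{\ell}$ or to Lemma~\ref{lem:binom-cycle} at all, and in fact a slightly better intermediate bound ($n/d$ rather than $n/p^{\ell}$). Your invocation of Lucas to justify that each length-$p^{\ell}$ block contains a non-vanishing index does no work: since $p^{\ell}\ge d$, any such block contains a length-$d$ window, so the non-vanishing step alone suffices. The proof as written is still correct, but dropping the periodicity discussion would make your version cleaner and more self-contained than the paper's.
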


\begin{proof}
Let $\ell$ be the integer such that $p^{\ell-1} < d \le p^\ell$. 
The first $d$ vectors $\step_{0},\dots,\step_{d-1}$ are forming
a basis in $\pa*{\Rmod {p}}^{d}$ since
\[
\pa*{\begin{array}{ccc}
- & \step_{0} & -\\
- & \step_{1} & -\\
 & \vdots\\
- & \step_{d-1} & -
\end{array}}=\pa*{\begin{array}{ccccc}
1 & 0 & 0 & \cdots & 0\\
1 & 1 & 0 & \cdots & 0\\
1 & 2 & 1 & \cdots & 0\\
\vdots & \vdots & \vdots & \ddots & \vdots\\
1 & d-1 & \binom{d-1}{2} & \cdots & 1
\end{array}}\bmod{p} \text{.}
\]
Given $\u\in\pa*{\Rmod {p}}^{d}$
be a non-zero vector, we have some $i_{0}<d$ such that $\ang*{ \u,\step_{i_{0}}} \ne0$.
So
by Lemma~\ref{lem:binom-cycle} we get that $\step_{i}=\step_{i+p^{\ell}}$
and in particular $\ang*{\u,\step_{i_{0}+m p^{\ell}}} \ne0$
for all $m\le\pa*{n-i_{0}-1}/p^{\ell}$. So we found $\floor*{\pa*{n-i_{0}-1}/p^{\ell}} +1$
vectors $\step_{i}$ such that $\ang*{\u,\step_{i}} \ne0$.
Hence,
\begin{align*}
\den_{\u}\pa*{\step_{0},\dots,\step_{n-1}} & \ge\floor*{ \frac{n-i_{0}-1}{p^{\ell}}} +1\\
 & > \frac{n}{p^{\ell}} - \frac{i_{0}+1}{p^{\ell}}\\
 & \ge \frac{n}{pd} - 1\text{.}\qedhere
\end{align*}
\end{proof}
\begin{cor}
\label{cor:minimal-den-mod-q}
Let $\step_{0},\dots,\step_{n-1}$ be the Pascal vectors of length $d$ modulo $p^k$ and $n\ge d$. Then we have that
\[
\frac{n}{pd} - 1\le\sigma\pa*{\step_{0},\dots,\step_{n-1}}.
\]
\end{cor}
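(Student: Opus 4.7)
The plan is to reduce the statement to Lemma~\ref{lem:minimal-den-mod-p} via a $p$-adic valuation argument. Given an arbitrary non-zero vector $\u\in\pa*{\Rmod{p^k}}^d$, I want to produce another non-zero vector modulo $p$ whose inner products with the $\step_i$ (reduced mod $p$) control the inner products $\ang*{\u,\step_i}$ in $\Rmod{p^k}$.

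Concretely, if $\u\ne\vecz$ in $\pa*{\Rmod{p^k}}^d$, let $j$ be the largest integer with $0\le j<k$ such that every component of $\u$ is divisible by $p^j$. Writing $\u=p^j\u'$ with $\u'\in\pa*{\Rmod{p^k}}^d$, by the choice of $j$ at least one coordinate of $\u'$ is a unit modulo $p$, so the reduction $\bar{\u}'\in\pa*{\Rmod p}^d$ is non-zero. The Pascal vectors $\step_0,\dots,\step_{n-1}$ modulo $p^k$ reduce coordinatewise to the Pascal vectors $\bar{\step}_0,\dots,\bar{\step}_{n-1}$ modulo $p$, and for each $i$ we have the compatibility
\[
\ang*{\u,\step_i}\equiv p^j\ang*{\u',\step_i}\pmod{p^k},\qquad \ang*{\u',\step_i}\bmod p = \ang*{\bar{\u}',\bar{\step}_i}.
\]

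Applying Lemma~\ref{lem:minimal-den-mod-p} to $\bar{\u}'$ yields at least $n/(pd)-1$ indices $i$ with $\ang*{\bar{\u}',\bar{\step}_i}\ne 0$ in $\Rmod p$. For each such $i$, the integer $\ang*{\u',\step_i}$ is a unit modulo $p$, so $\ang*{\u,\step_i}=p^j\ang*{\u',\step_i}$ has $p$-adic valuation exactly $j<k$, and in particular is non-zero in $\Rmod{p^k}$. Hence $\den_{\u}\pa*{\step_0,\dots,\step_{n-1}}\ge n/(pd)-1$, and taking the minimum over all $\u\ne\vecz$ gives the claim.

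I do not expect a real obstacle here: the only subtlety is extracting the correct power of $p$ so that the resulting vector remains primitive modulo $p$, and once that is done the reduction to the mod-$p$ lemma is immediate. No new estimates or Fourier machinery are required.
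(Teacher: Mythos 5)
Your proof is correct, and it takes the same high-level route as the paper: reduce to the mod-$p$ case and invoke Lemma~\ref{lem:minimal-den-mod-p}. But you are actually \emph{more} careful than the paper's own argument. The paper asserts directly that $\ang*{\u,\step_i}_{\Rmod p}\ne0$ implies $\ang*{\u,\step_i}_{\Rmod{p^k}}\ne0$, hence $\den_{\u}$ over $\Rmod p$ is dominated by $\den_{\u}$ over $\Rmod{p^k}$, and then passes to the minima. As written, that last step is only immediate when $\u\bmod p\ne\vecz$; if $\u$ is a non-zero element of $\pa*{\Rmod{p^k}}^d$ all of whose coordinates are divisible by $p$, the ``$\den_{\u}$ over $\Rmod p$'' being bounded above by $\den_{\u}$ over $\Rmod{p^k}$ gives no information about $\sigma$ over $\Rmod p$, since $\u\bmod p$ is not an admissible vector in the minimum. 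Your factorization $\u=p^j\u'$ with $\u'$ primitive, together with the observation that $p^j$ times a unit is non-zero in $\Rmod{p^k}$ whenever $j<k$, precisely closes this gap. In short: same key lemma and same reduction strategy, but your version spells out the valuation bookkeeping that the paper's one-line proof elides.
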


\begin{proof}
Denote by $\ang*{\cdot,\cdot}_{R}$ the dot product
in the ring $R$. Clearly $\ang*{\u,\step_{i}}_{\Rmod p}\ne0$ implies that $\ang*{ \u,\step_{i}}_{\Rmod{p^{k}}}\ne0$.
Hence
\[
\den_{\u}\pa*{\step_{0}\bmod{p},\dots,\step_{n-1}\bmod{p}}\le\den_{\u}\pa*{\step_{0},\dots,\step_{n-1}}
\]
and so $\sigma\pa*{\step_{0}\bmod{p},\dots,\step_{n-1}\bmod{p}}\le\sigma\pa*{\step_{0},\dots,\step_{n-1}}$.
\end{proof}

\begin{proof}[Proof of Proposition~\ref{prop:coeff-prob}]
We expand $f\pa*{r+X}$ into two ways. By Taylor's expansion we have
\[
f\pa*{r+X}=f\pa*{r}+f'\pa*{r}X+\frac{1}{2}f''\pa*{r}X^{2}+\dots+X^{n}\text{.}
\]
On the other hand, we apply Newton's binomial theorem
\begin{align*}
f\pa*{r+X} & =\rC_{0}+\rC_{1}\pa*{r+X}+\dots+\rC_{n}\sum_{j=0}^{n}\binom{n}{j}r^{j}X^{n-j}\\
 & =\sum_{j=0}^{n}\rC_{j}r^{j}+\sum_{j=1}^{n}\rC_{j}\binom{j}{1}r^{j-1}X+\dots+X^{n}\text{.}
\end{align*}
Here $\rC_n = 1$.
Comparing the coefficients in both expansions we get that
\begin{equation}\label{eq:poly-taylor-coeff}
f^{\pa*{i}}\pa*{r}/i!=\sum_{j=0}^{n}\rC_{j}\binom{j}{i}r^{j-i}
\text{,}\qquad
i=0,\dots,d-1
\text{.}
\end{equation}
Since $p\nmid r$ we get that $f^{\pa*{i}}\pa*{r}/i!\equiv\gamma_{i}\pmod{p^{m_{i}}}$
if and only if $f^{\pa*{i}}\pa*{r}r^{i}/i!\equiv\gamma_{i}r^{i}\pmod{p^{m_{i}}}$.

Next we apply Proposition~\ref{prop:zero-random-walk} with $V=\pa*{\Rmod{p^{M}}}^{d}$ and $\step_{0},\dots,\step_{n}$ the Pascal vectors of length $d$ modulo $p^k$ (see equation \eqref{eq:pascal-vector}).
By equation \eqref{eq:poly-taylor-coeff}, we have 
\begin{equation}\label{eq:poly-taylor-coeff-vector}
\sum_{i=0}^{n-1}\rC_{i}\step_{i}r^{i}+\step_{n}r^{n}=\pa*{f\pa*{r},f'\pa*{r}r,\dots,\frac{1}{d!}f^{\pa*{d}}\pa*{r}r^{d}}\text{.}
\end{equation}
If we set $S=\set*{ \pa*{x_{0},\dots,x_{d-1}} \in V:\forall i,x_{i}\equiv r^{i}\gamma_{i}\pmod{p^{m_{i}}}} $,
then by Proposition~\ref{prop:zero-random-walk} we have that
\begin{multline*}
\Pr\pa*{\sum_{i=0}^{n-1}\rC_{i}\step_{i}r^{i}+\step_{n}r^{n}\in S}=\sum_{\u\in-\step_{n}r^{n}+S}\nu_{r}\pa*{\u}\\
=\frac{\#S}{\#V}+O\pa*{\#S\cdot\exp\pa*{-\frac{\tau\sigma\pa*{\step_{0},\step_{1},\dots,\step_{n-1}}}{p^{2M}}}},
\end{multline*}
where $\tau$ is as defined in equation \eqref{eq:tau}.
Since $\#S=\prod_{i=0}^{d-1}p^{M-m_{i}}=p^{dM}p^{-N}\le p^{dM}$, $\#V=p^{dM}$,
by Corollary~\ref{cor:minimal-den-mod-q}
\begin{equation*}
\Pr\pa*{\sum_{i=0}^{n}\rC_{i}\step_{i}r^{i}\in S}=p^{-N}+O\pa*{\exp\pa*{-\frac{\tau n}{p^{2M+1}d}+dM\cdot\log p}}.
\end{equation*}
We left only with showing that $\tau > 0$ which is true since $\rC_i \bmod p$ is non-constant.
\end{proof}
\section{Proof of the main theorem}
\label{sec:main-theorem}

We prove that for $f$ as in Theorem~\ref{thm:main}, $f\pa*{r+pX} \bmod{p^{k}}$ is uniformly distributed in $\Upsilon_k$ up to an exponentially small error.
In this section we use the notation $f_r\pa*{X} = f\pa*{r+pX}$ as in equation~\eqref{prop:root-count-split}.
\begin{lem}
\label{lem:ups-equidist}
Let $f$ be a random polynomial defined as in Theorem
\ref{thm:main} and let $0<\varepsilon<1$.
Then there exists $c>0$ depending only on $\varepsilon$ and the distribution of $\rC_i$,
such that for any integer $1\le r<p$, a positive integer $k\le \frac{\varepsilon\log n}{2\log p}$
and a fixed polynomial $h\in\Upsilon_{k}\subseteq\Rmod{p^{k}}\br*{X}$,
we have
\[
\Pr\pa*{f_{r}\equiv h\pmod{p^{k}}}=\frac{1}{\#\Upsilon_{k}}+O\pa*{\exp\pa*{-cn^{1-\varepsilon}}}
\]
as $n\to\infty$.
\end{lem}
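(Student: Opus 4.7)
The plan is to reduce the lemma directly to Proposition~\ref{prop:coeff-prob} by matching the coefficients of $f_{r}\bmod p^{k}$ with the Taylor coefficients $f^{\pa{i}}(r)/i!$ of $f$ at the point $r$. First I would expand
\[
f_{r}\pa*{X}=f\pa*{r+pX}=\sum_{i=0}^{n}\frac{f^{\pa{i}}\pa*{r}}{i!}\,p^{i}X^{i}.
\]
Reducing modulo $p^{k}$ kills all terms with $i\ge k$, so $f_{r}\bmod p^{k}$ automatically lies in $\Upsilon_{k}$, and for each $i<k$ its coefficient of $p^{i}X^{i}$ is $f^{\pa{i}}\pa*{r}/i!$ reduced modulo $p^{k-i}$. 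Writing the fixed target as $h\pa*{X}=\sum_{i=0}^{k-1}c_{i}p^{i}X^{i}$ with $c_{i}\in\Rmod{p^{k-i}}$, the event $\set*{f_{r}\equiv h\pmod{p^{k}}}$ therefore translates into the conjunction of congruences
\[
\frac{f^{\pa{i}}\pa*{r}}{i!}\equiv\gamma_{i}\pmod{p^{k-i}},\qquad i=0,\dots,k-1,
\]
where each $\gamma_{i}\in\ZZ_{p}$ is a fixed lift of $c_{i}$.

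Next I would invoke Proposition~\ref{prop:coeff-prob} with $d=k$ and $m_{i}=k-i$, giving $M=k$ and $N=k\pa*{k+1}/2$. The constant $\tau$ supplied by that proposition is positive because $\rC_{i}\bmod p$ is non-constant, per the hypothesis of Theorem~\ref{thm:main}. The main term $p^{-N}=p^{-k\pa*{k+1}/2}$ matches $1/\#\Upsilon_{k}$ thanks to equation~\eqref{eq:ups-size-1}, and the error is
\[
O\pa*{\exp\pa*{-\frac{\tau n}{p^{2k+1}k}+k^{2}\log p}}.
\]

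The final step, which I expect to be the main obstacle, is to verify that under the constraint $k\le\varepsilon\log n/\pa*{2\log p}$ (whence $p^{2k}\le n^{\varepsilon}$) this exponent is at most $-c\,n^{1-\varepsilon}$ for some $c=c\pa*{\varepsilon,\mu}>0$ and $n$ sufficiently large. The positive $k^{2}\log p$ contribution is only $O\pa*{\pa*{\log n}^{2}}$, whereas the negative contribution has order $n^{1-\varepsilon}$ up to a logarithmic factor coming from the $k$ in the denominator. Careful bookkeeping of these logarithmic losses, absorbing them into the constant $c$, yields the claimed exponential decay; the rest of the argument is an essentially algebraic reduction that lets Proposition~\ref{prop:coeff-prob} do the real work.
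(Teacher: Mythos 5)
Your proposal reproduces the paper's argument exactly: the same identification of $f_r \bmod p^k$ with the Taylor coefficients $f^{(i)}(r)/i!$ reduced modulo $p^{k-i}$, and the same invocation of Proposition~\ref{prop:coeff-prob} with $d=k$, $m_i=k-i$, $M=k$, $N=k(k+1)/2$, matching the main term $p^{-N}$ to $1/\#\Upsilon_k$ via equation~\eqref{eq:ups-size-1}.

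You are right to flag the final numerical verification as the one delicate step, and your phrase ``absorbing them into the constant $c$'' glosses over a genuine subtlety that the paper's own proof also suppresses. At the extreme admissible value $k\approx\varepsilon\log n/(2\log p)$ one has $p^{2k+1}k\asymp n^{\varepsilon}\log n$, so the dominant term in the exponent is $-\tau n/(p^{2k+1}k)\asymp -n^{1-\varepsilon}/\log n$, and $\exp\bigl(-c'\,n^{1-\varepsilon}/\log n\bigr)$ is \emph{not} $O\bigl(\exp(-c\,n^{1-\varepsilon})\bigr)$ for any fixed $c>0$, since the ratio is $\exp\bigl(n^{1-\varepsilon}(c'/\log n - c)\bigr)\to 0$ only when $c\le 0$. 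The logarithmic loss therefore cannot be absorbed into the multiplicative constant; what the computation actually yields is $O\bigl(\exp(-c\,n^{1-\varepsilon}/\log n)\bigr)$, or equivalently $O\bigl(\exp(-c\,n^{1-\varepsilon'})\bigr)$ for any $\varepsilon'>\varepsilon$. This imprecision is harmless downstream --- the proof of Theorem~\ref{thm:main} only uses that the error decays faster than any power of $n$, which $\exp(-cn^{\delta}/\log n)$ certainly does --- but a fully rigorous version of this lemma should either weaken the conclusion to $O\bigl(\exp(-cn^{1-\varepsilon}/\log n)\bigr)$ or tighten the hypothesis to $k\le\varepsilon'\log n/(2\log p)$ for some $\varepsilon'<\varepsilon$.
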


\begin{proof}
Recall that
\[
f_{r}\pa*{X}=f\pa*{r}+f'\pa*{r}pX+\frac{1}{2}f''\pa*{r}p^{2}X^{2}+\dots+p^{n}X^{n}\text{.}
\]
As $h\in\Upsilon_{k}$, it is of the form $h\pa*{X}=c_{0}+c_{1}pX+\dots+c_{k-1}p^{k-1}X^{k-1}$.
We have
\[
f_{r}\equiv h\pmod{p^{k}}\iff
\frac{1}{i!}f^{\pa*{i}}\pa*{r}\equiv c_{i}\pmod{p^{k-i}}
\text{,}\quad i=0,\dots,k-1
\text{.}
\]
Apply Proposition~\ref{prop:coeff-prob} with
$d=k$, $m_i = k-i$ and $\gamma_{i} = c_i$ so that $N=\sum_{i=0}^{k-1}\pa*{k-i}=k\pa*{k+1}/2$, $M=k$ and
\[
\Pr\pa*{f_r \equiv h\pmod{p^{k}}}=p^{-k\pa*{k+1}/2}+O\pa*{\exp\pa*{-\frac{\tau n}{2p^{2k+1}k}+k^{2}\log p}}\text{.}
\]
The main term is indeed $1/\#\Upsilon_{k}$ by equation \eqref{eq:ups-size-1}.
By the assumption on $k$, the error term is $O\pa*{\exp\pa*{-cn^{1-\varepsilon}}}$ as needed.
\end{proof}
\begin{proof}[Proof of Theorem~\ref{thm:main}]
By equation~\eqref{prop:root-count-split} we have $\rCount f=\sum_{r=0}^{p-1}\rCount{f_{r}}$.
Thus
\begin{equation}\label{eq:main-split}
\Ex\br*{\rCount f}=\Ex\br*{\rCount{f_{0}}}+\sum_{r=1}^{p-1}\Ex\br*{\rCount{f_{r}}}\text{.}
\end{equation}
Let $k=\floor{\frac{\pa*{1-\dlt}\log n}{2\log p}}$ where $\dlt$ is a positive real to be defined later.
So for any $1\le r<p$, we apply the law of total expectation and
Lemma~\ref{lem:ups-equidist} to get $c_1>0$ such that
\begin{equation}
\label{eq:main-1}
\begin{aligned}
\Ex\br*{\rCount{f_{r}}}&=\sum_{h\in\Upsilon_{k}}\Ex\br*{\rCount{f_{r}}\cond f_{r}\equiv h\pmod{p^{k}}}\Pr\pa*{f_{r}\equiv h\pmod{p^{k}}}\\
&=\sum_{h\in\Upsilon_{k}}\Ex\br*{\rCount{f_{r}}\cond f_{r}\equiv h\pmod{p^{k}}}\pa*{\frac{1}{\#\Upsilon_{k}}+O\pa*{\exp\pa*{-c_{1}n^{\dlt}}}}\text{.}
\end{aligned}
\end{equation}
Since $\Ex\br*{\rCount{f_{r}}\cond f_{r}\equiv h\pmod{p^{k}}}=O\pa*{n}$
and $\#\Upsilon_{k}=p^{k\pa*{k+1}/2}$ (see equation~\eqref{eq:ups-size-1}),
we may bound the error term in equation \eqref{eq:main-1} as follows
\begin{align*}
\sum_{h\in\Upsilon_{k}}\Ex\br*{\rCount{f_{r}}\cond f_{r}\equiv h\pmod{p^{k}}}\exp\pa*{-c_{1}n^{\dlt}}
&=O\pa*{np^{k\pa*{k+1}/2}\exp\pa*{-c_{1}n^{\dlt}}}\\
&=O\pa*{\exp\pa*{-c_{1}n^{\dlt} + c_2\log^2 n}} \\
&=O\pa*{\exp\pa*{-c_{3}n^{\dlt}}}
\end{align*}
for some $c_{2}, c_{3}>0$. Plugging this in equation \eqref{eq:main-1} gives
\begin{equation}\label{eq:main-2}
\Ex\br*{\rCount{f_{r}}}=\frac{1}{\#\Upsilon_{k}}\sum_{h\in\Upsilon_{k}}\Ex\br*{\rCount{f_{r}}\cond f_{r}\equiv h\pmod{p^{k}}} +O\pa*{\exp\pa*{-c_{3}n^{\dlt}}}\text{.}
\end{equation}

Let $g$ be a random polynomial distributed according to the law
\[
\Pr\pa*{g\in E}=\frac{1}{\#\Upsilon_{k}}\sum_{h\in\Upsilon_{k}}\Pr\pa*{f_{r}\in E\cond f_{r}\equiv h\pmod{p^{k}}}
\text{,}\quad E\subseteq \ZZ_p\br*{X}\text{ Borel.}
\]
This distribution is well-defined for $n$ sufficiently large, since $\Pr\pa*{f_{r}\equiv h\pmod{p^{k}}}$ is bounded away from zero by Lemma~\ref{lem:ups-equidist}.

Then
\begin{equation}\label{eq:ex-g-1}
\Ex\br*{\rCount g}=\frac{1}{\#\Upsilon_{k}}\sum_{h\in\Upsilon_{k}}\Ex\br*{\rCount{f_{r}}\cond f_{r}\equiv h\pmod{p^{k}}}\text{.}
\end{equation}
On the other hand, $g\bmod{p^{k}}$ is distributed uniformly in $\Upsilon_{k}$.
Assume $n$ is sufficiently large with respect to $p$.
Then $k \ge {\log n}/{4\log p}$, so $\deg f_{r}=n=p^{4\cdot{\log n}/{4\log p}}\le p^{4k}$.
Hence $\deg g\le p^{4k}$ almost surely. So by Proposition
\ref{prop:ups-root-count-rephrased} we conclude that
\begin{equation}\label{eq:ex-g-2}
\Ex\br*{\rCount g}=\frac{1}{p+1}+O\pa*{p^{-\pa*{1-\dlt}k/2}}\text{.}
\end{equation}

Plugging equations \eqref{eq:ex-g-1} and \eqref{eq:ex-g-2} into equation \eqref{eq:main-2} gives
\begin{equation*}
\Ex\br*{\rCount{f_{r}}}=\frac{1}{p+1}+O\pa*{p^{-\pa*{1-\dlt}k/2}+\exp\pa*{-c_{2}n^{\dlt}}}\text{.}
\end{equation*}
We choose $\dlt$ such that $\dlt < 2 \varepsilon$, so that $p^{-\pa*{1-\dlt}k/2}=O\pa*{n^{-1/4+\varepsilon}}$. Thus
\begin{equation*}
\Ex\br*{\rCount{f_{r}}}=\frac{1}{p+1}+O\pa*{n^{-1/4+\varepsilon}}\text{.}
\end{equation*}
Finally, we finish the proof by substituting $\Ex\br*{\rCount{f_{r}}}$ into equation \eqref{eq:main-split}.
\end{proof}

\section{The expected value of \texorpdfstring{$\rCountF[\ZZ_p]{f_0}$}{C\_Zp(f\_0)}}
\label{sec:additional-results}

In this section we prove two results on $\Ex\br*{\rCountF[\ZZ_p]{f_0}}$
mentioned in the introduction, equation \eqref{eq:f0-bound} and Proposition~\ref{prop:better-root-count}.
\begin{proof}[Proof of equation \eqref{eq:f0-bound}]
We have that
\[
\Ex\br*{\rCount{f_{0}}}=\sum_{k=1}^{\infty}k\Pr\pa*{\rCount{f_{0}}=k}=\sum_{k=1}^{\infty}\Pr\pa*{\rCount{f_{0}}\ge k}\text{.}
\]
Since $\deg\pa*{f_{0}\bmod{p^{k}}}<k$, Proposition~\ref{prop:roots-in-zp} gives that
$\rCount{f_{0}}\ge k$ only if $f_0\equiv0\pmod{p^{k}}$. Put $q=\Pr\pa*{\rC_0 \equiv 0 \pmod{p}}$. Since $\rC_{0},\dots,\rC_{n-1}$ are i.i.d., we conclude that
\[
  \Pr\pa*{\forall i<k,\rC_{i}\equiv0\pmod{p}} = q^k \text{.}
\]
Hence
\begin{align*}
\Ex\br*{\rCount{f_{0}}} & \le\sum_{k=1}^{\infty}\Pr\pa*{f_0\equiv0\pmod{p^{k}}}\\
 & \le\sum_{k=1}^{\infty}\Pr\pa*{\forall i<k,\rC_{i}\equiv0\pmod{p}}
 =\sum_{k=1}^{\infty}q^{k} = \frac{q}{1-q}\text{.}\qedhere
\end{align*}
\end{proof}
Proposition~\ref{prop:better-root-count} follows from
\begin{lem}
\label{lem:no-non-zero-roots}
Assume $\rC_0, \dots,\rC_{n-1}$ satisfy the hypothesis of Proposition~\ref{prop:better-root-count}. The polynomial $f_{0}$ has no non-zero roots in $\ZZ_p$
almost surely.
\end{lem}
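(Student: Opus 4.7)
The plan is to observe that the hypothesis forces each $\rC_i$ to be almost surely either zero or a $p$-adic unit, and then to show that the different non-zero summands of $f_0\pa*{\alpha}$, for any putative non-zero root $\alpha\in\ZZ_p$, have pairwise distinct $p$-adic valuations, so the ultrametric equality prevents them from summing to zero.

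First I would rewrite the hypothesis as $\Pr\pa*{\rC_i \ne 0 \text{ and } p \mid \rC_i} = 0$, so almost surely $\rC_i \in \set*{0} \cup \ZZ_p^{\times}$ for every $i$; restrict attention to this almost sure event. Write $f_0\pa*{X} = \sum_{i=0}^{n} \rC_i p^i X^i$ with the convention $\rC_n = 1$, take a hypothetical non-zero root $\alpha \in \ZZ_p$, and factor $\alpha = p^v \beta$ with $v \ge 0$ and $\beta \in \ZZ_p^{\times}$. For each $i$ with $\rC_i \ne 0$, the $i$-th summand evaluates to $\rC_i \beta^i \cdot p^{i\pa*{v+1}}$; since $\rC_i \beta^i$ is then a unit, this summand has $p$-adic valuation exactly $i\pa*{v+1}$.

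The crucial step is the observation that $v+1 \ge 1$, so the valuations $i\pa*{v+1}$ indexed by those $i$ with $\rC_i \ne 0$ are pairwise distinct. When finitely many elements of $\QQ_p$ have pairwise distinct valuations, the strong triangle inequality is an equality, so their sum has valuation equal to the minimum of the individual valuations. In particular, since $\rC_n = 1 \ne 0$ guarantees that at least one summand is non-zero, $f_0\pa*{\alpha}$ has finite $p$-adic valuation and hence $f_0\pa*{\alpha}\ne 0$, contradicting the assumption. The only subtle point is recognising that the factor $p^i$ built into the coefficients of $f_0$, combined with the zero-or-unit dichotomy for the $\rC_i$, automatically separates the non-zero summands at distinct $p$-adic scales; after that, everything is immediate from the non-archimedean nature of $\abs*{\cdot}_p$.
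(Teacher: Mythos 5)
Your proof is correct. Both your argument and the paper's rest on the same underlying valuation observation, but you deploy it in a cleaner, one-shot manner. After passing to the almost sure event that every $\rC_i$ is either $0$ or a $p$-adic unit, you note that for a non-zero $\alpha = p^v\beta$ with $\beta\in\ZZ_p^\times$, the non-zero summands $\rC_i p^i\alpha^i$ have the pairwise distinct valuations $i(v+1)$, and then invoke the strict ultrametric equality to conclude that their sum cannot vanish (the term $i=n$ with $\rC_n=1$ ensures the sum is genuinely non-empty). The paper instead runs an induction on $i$: it reduces the equation $f_0(\alpha)=0$ modulo $p^{vi+i+1}$, uses the inductive hypothesis $\rC_0=\dots=\rC_{i-1}=0$ a.s.\ to kill the lower terms and the valuation bound $(v+1)j\ge vi+i+1$ for $j>i$ to kill the higher ones, isolates $\rC_i\tilde\alpha^i p^{vi+i}$, and concludes $p\mid\rC_i$ hence $\rC_i=0$ a.s.; at the end it notes $f_0=p^nX^n$ has no non-zero root. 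Your version gets the contradiction directly without having to show that all coefficients vanish, and it handles the ``almost surely'' bookkeeping more transparently by isolating the probabilistic content into a single upfront reduction to a deterministic statement. Both arguments are essentially the same in spirit --- identify the scales $i(v+1)$ at which the terms live --- and either would be acceptable; yours is a bit more economical.
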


\begin{proof}
Assume $f_{0}$ has a non-zero root in $\ZZ_p$ and let $\alpha$ be such root.
We argue by induction that $\rC_{i}=0$ almost surely, for $i=0,\dots,n-1$.

For $i=0$, reduce the equation $f_{0}\pa*{\alpha}=0$ modulo $p$ to get:
\begin{equation*}
\rC_{0}\equiv\rC_{0}+\rC_{1}p\alpha+\dots+p^{n}\alpha^{n}\equiv0\pmod{p} \text{.}
\end{equation*}
Hence, $p\mid\rC_{0}$ and by the hypothesis we get
$\rC_{0}=0$ almost surely.

Next, assume that $\rC_{0}=\rC_{1}=\dots=\rC_{i-1}=0$ almost surely.
Since $\alpha\ne0$ there exists a non-negative integer $v \ge 0$ such that $p^v \mid \alpha$ and $p^{v+1} \nmid \alpha$.

We reduce the equation $f_{0}\pa*{\alpha}=0$ modulo $p^{vi+i+1}$ to get:
\begin{equation*}
\rC_{i}p^{i}\alpha^{i}+\rC_{i+1}p^{i+1}\alpha^{i+1}+\dots+p^{n}\alpha^{n} \equiv 0\pmod{p^{vi+i+1}}.
\end{equation*}
Write $\alpha = p^v \tilde\alpha$ where $p\nmid \tilde{\alpha}\in \ZZ_p$ so
\begin{equation*}
\rC_{i}p^{\pa*{v+1}i}\tilde{\alpha}^{i} +\rC_{i+1}p^{\pa*{v+1}\pa*{i+1}}\tilde{\alpha}^{i+1} + \dots + p^{\pa*{v+1}n}\tilde{\alpha}^{n} \equiv 0\pmod{p^{vi+i+1}}.
\end{equation*}
Since $\rC_j\tilde{\alpha} \in \ZZ_p$ and $p^{\pa*{v+1}j} \mid p^{vi+i+1}$ for any $j > i$ we get that
\begin{equation*}
\rC_{i}p^{vi+i}\tilde{\alpha}^{i} \equiv 0\pmod{p^{vi+i+1}}.
\end{equation*}
Thus $\rC_{i}\tilde{\alpha}^i\equiv0\pmod{p}$ and since $p\nmid\tilde{\alpha}$, we get that $p\mid \rC_{i}$.
By the hypothesis, $\rC_{i}=0$ almost surely, as needed.

This means that $f_{0}\pa*{X}=p^{n}X^{n}$ almost surely assuming
the event that $f_{0}$ has a non-zero root.
But clearly the only root of $p^{n}X^{n}$ is zero. This contradiction shows that $f_0$ has no non-zero roots in $\ZZ_p$ almost surely.
\end{proof}

\newpage
\appendix
\section{Roots of Haar random polynomials}
\label{app:haar-random-polynomial}
Consider the random polynomial
\[
f\pa*{X}=\rC_{0}+\rC_{1}X+\dots+\rC_{n-1}X^{n-1}+X^{n}
\]
where $\rC_{0},\dots,\rC_{n-1}$ are i.i.d.\ random variables which  take values in the ring $\ZZ_{p}$ according to Haar measure on $\ZZ_p$.
We denote by $\rCount f$ the number of roots of $f$ in $\ZZ_p$ without multiplicities,
i.e.,
\[
\rCount f=\#\set*{ \alpha\in\ZZ_{p}:f\pa*{\alpha}=0} \text{.}
\]
We prove the following formula:
\begin{equation}\label{eq:2nd-haar-roots}
\Ex\br*{\rCount f} = \frac{p}{p+1}.
\end{equation}
Moreover, we prove another formula:
\begin{equation}\label{eq:haar-roots-modulo-r}
\Ex\br*{\rCount{f_r}} = \frac{1}{p+1} \quad (\forall r=0,\dots,p-1),
\end{equation}
where $f_r$ is the polynomial $f_r\pa*{X} = f\pa*{r+pX}$.

We recall the definition of Haar measure. The $p$-adic norm induces a
metric on $\QQ_p$ defined
by $d\pa*{\alpha,\beta} = \abs*{\alpha-\beta}_{p}$.
The open balls of this metric are of the form $\alpha+p^{k}\ZZ_{p}$
for some $\alpha\in\QQ_{p}$ and $k\in\ZZ$.
Since the
$p$-adic absolute value is discrete, every open ball is also closed
and compact.
By Haar's theorem
(see \cite[Chapter XI]{halmos_measure_1974}), there exists a unique up to a constant, regular measure
$\mu$ on Borel subsets of $\QQ_{p}$ such that for any Borel set
$E\subseteq\QQ_{p}$ and $\alpha\in\QQ_{p}$:
\begin{align*}
\mu\pa*{\alpha+E} & =\mu\pa*{E}\\
\mu\pa*{\alpha E} & =\abs*{\alpha}_{p}\mu\pa*{E}
\end{align*}
Such a measure is called a \emph{Haar measure}.

We denote by $\mu$ the Haar measure on $\QQ_p$ such that $\mu\pa*{\ZZ_p} = 1$, so the law of $\rC_i$ is $\mu$ restricted to $\ZZ_p$.
All integrals in this appendix are Lebesgue integrals according to
the measure $\mu$ or to a product measure of its copies.

We start with surveying the tools we shall use to derive equation~\eqref{eq:2nd-haar-roots}. We start with the Igusa's local
zeta functions (for more details see \cite{denef_report_1990}).
Let $F\in\ZZ_{p}\big[\xv\big]$ be a multivariate polynomial in
$\xv=\pa*{X_{1},X_{2},\dots,X_{m}}$. We define the \emph{Igusa's
local zeta function associated to $F$} to be
\[
Z_{F}\pa*{s}=\int_{\ZZ_{p}^{m}}\abs*{F\pa*{\alv}}_{p}^{s}\diff\alv\text{,}
\]
for $s\in\CC$, $\Re\pa*{s}>0$.

We also associate to $F$ \emph{the Poincar\'e series}:
Let $N_k$ be the number of solutions of $F\pa*{\xv}\equiv 0 \pmod{p^k}$.
Then
\begin{equation*}
P_{F}\pa*{t}=\sum_{k=0}^{\infty}p^{-mk}N_{k}t^{k}\text{,}
\end{equation*}
for $t\in\CC$, $\abs*{t} < 1$

We have a nice formula relating $Z_F$ and $P_F$:
\begin{equation}\label{eq:zeta-poincare}
P_{F}\pa*{p^{-s}}=\frac{1-p^{-s}Z_{F}\pa*{s}}{1-p^{-s}}\text{,}
\end{equation}
see \cite[Section 1.2]{denef_report_1990}.

The next tool is an integration formula, see \cite[Proposition 2.3]{evans_expected_2006}:
Let $f\in\QQ_{p}\br*{X}$ be a polynomial and
let $g:\QQ_{p}\to\RR^{+}$ be a measurable function. Then
\begin{equation}\label{eq:padic-integral}
\int_{\ZZ_{p}}g\circ f\pa*{\alpha}\abs*{f'\pa*{\alpha}}_p\diff\alpha=\int_{\QQ_{p}}g\pa*{\beta}\rCount{f-\beta}\diff\beta\text{.}
\end{equation}

We are now ready to prove our formulas:
\begin{proof}[Proof of equation~\eqref{eq:2nd-haar-roots}]
The random variable $\rC_{0}-\beta$ distributes the same as $\rC_{0}$
for any $\beta\in\ZZ_{p}$, since Haar measure is invariant under translations.
Hence $\Ex\br*{\rCount{f-\beta}}$ is a constant that is independent
of $\beta$. By Fubini's theorem:
\begin{equation}\label{eq:haar-integral-1st-fubini}
\Ex\br*{\rCount f}=\int_{\ZZ_{p}}\Ex\br*{\rCount{f-\beta}}\diff\beta=\Ex\br*{\int_{\ZZ_{p}}\rCount{f-\beta}\diff\beta}\text{.}
\end{equation}
If $\beta\in\QQ_{p}\setminus\ZZ_{p}$ then $f-\beta$ has no
roots in $\ZZ_{p}$ and so $\rCount{f-\beta}=0$. Hence, by equation~\eqref{eq:padic-integral} with $g=1$ we get:
\[
\int_{\ZZ_{p}}\rCount{f-\beta}\diff\beta=\int_{\QQ_{p}}\rCount{f-\beta}\diff\beta=\int_{\ZZ_{p}}\abs*{f'\pa*{\alpha}}_p\diff\alpha
\text{.}
\]
Plugging this into equation \eqref{eq:haar-integral-1st-fubini} and using Fubini's theorem again gives
\begin{equation}\label{eq:haar-integral-2st-fubini}
\Ex\br*{\rCount f}=\int_{\ZZ_{p}}\Ex\br*{\abs*{f'\pa*{\alpha}}_p} \diff\alpha
\end{equation}

To calculate $\Ex\br*{\abs*{f'\pa*{\alpha}}_p}$
we define a multivariate polynomial $F_{\alpha}\in\ZZ_{p}\br*{X_{0},\dots,X_{n-1}}$:
\[
F_{\alpha}\pa*{\xv}=X_{1}+\dots+\pa*{n-1}\alpha^{n-2}X_{n-1}+n\alpha^{n-1}\text{,}
\]
so that $f'\pa*{\alpha}=F_{\alpha}\pa*{\rC_{0},\dots,\rC_{n-1}}$.
Put $\rCv=\pa*{\rC_{0},\dots,\rC_{n-1}}$ to get that
\begin{equation}\label{eq:expected-value-as-zeta}
\Ex\br*{\abs*{f'\pa*{\alpha}}_p}=\int_{\ZZ_{p}^{n}}\abs*{F_{\alpha}\pa*{\rCv}}_p\diff\rCv=Z_{F_{\alpha}}\pa*{1}.
\end{equation}

Next we compute the Poincaré series of $F_{\alpha}$.
Since we can isolate $X_1$ in the equation $F_{\alpha}\big(\xv\big)\equiv0\pmod{p^{k}}$ there are $p^{k\pa*{n-1}}$
solutions modulo $p^k$.
Hence
\[
P_{F_{\alpha}}\pa*{t}=\sum_{n=0}^{\infty}p^{-k}t^{k}=\frac{1}{1-p^{-1}t}\text{.}
\]
By equation~\eqref{eq:zeta-poincare} we get
\[
\frac{1}{1-p^{-1}p^{-s}}=\frac{1-p^{-s}Z_{F_{\alpha}}\pa*{s}}{1-p^{-s}}.
\]
So $Z_{F_\alpha}\pa*{1}=p/\pa*{p+1}$. Taking equation~\eqref{eq:expected-value-as-zeta} and equation~\eqref{eq:haar-integral-2st-fubini} into account we get that
\[
\Ex\br*{\rCount f}
=\int_{\ZZ_{p}}Z_{F_\alpha}\pa*{1}\diff\alpha
=\frac{p}{p + 1}
\text{.}
\qedhere
\]
\end{proof}
\begin{proof}[Proof of equation~\eqref{eq:haar-roots-modulo-r}]
By grouping the roots according to their value modulo $p$ we get
\begin{equation*}
	\rCount f = \sum_{r=0}^{p-1}\rCount{f_{r}}.
\end{equation*}
By linearity of expectation and equation \eqref{eq:2nd-haar-roots} we get
\begin{equation*}
	\sum_{r=0}^{p-1} \Ex\br*{\rCount{f_{r}}} = \frac{p}{p+1}.
\end{equation*}
Therefore, it suffices to show that
\begin{equation*}
\Ex\br*{\rCount{f_{0}}} = \Ex\br*{\rCount{f_{1}}} = \dots = \Ex\br*{\rCount{f_{p-1}}}.
\end{equation*}

We take a look at the polynomial $f\pa*{r+X}$.
The coefficients of $f\pa*{r+X}$ are achieved by multiplying the coefficients of $f$ with a unipotent matrix.
Hence, the coefficients of $f\pa*{r+X}$ remains i.i.d.\ and distributed according to $\mu$ restricted to $\ZZ_p$.
Thus, the law of $f_r\pa*{X} = f\pa*{r+pX}$ is the same for all $r=0,\dots,p-1$, and $\Ex\br*{\rCount{f_0}} = \Ex\br*{\rCount{f_r}}$.
\end{proof}

\newpage
\bibliographystyle{alpha}
\bibliography{bib/main}

\end{document}